\documentclass[11pt,reqno]{amsart}

\usepackage[T1]{fontenc}
\usepackage{lmodern}
\usepackage{microtype}
\usepackage{amsmath,amssymb,mathtools,mathrsfs}
\usepackage{enumitem}
\usepackage{booktabs}
\usepackage{xcolor}
\usepackage[colorlinks=true,
            linkcolor=blue!55!black,
            citecolor=blue!55!black,
            urlcolor=blue!55!black]{hyperref}
\hypersetup{
  pdftitle={Minimal equators and homological systoles in Berger projective spaces},
  pdfauthor={Glen Wheeler},
  pdfsubject={Minimal submanifolds and homological systoles in Berger projective spaces},
  pdfkeywords={Berger metric, minimal submanifold, real projective space, Kahler angle, homological systole, integral geometry}
}
\usepackage[nameinlink,noabbrev]{cleveref}

\allowdisplaybreaks
\setlist[enumerate]{leftmargin=*,label=(\roman*)}

\newtheorem{theorem}{Theorem}[section]
\newtheorem{proposition}[theorem]{Proposition}
\newtheorem{lemma}[theorem]{Lemma}
\newtheorem{corollary}[theorem]{Corollary}
\newtheorem{conjecture}{Conjecture}[section]
\theoremstyle{definition}
\newtheorem{definition}[theorem]{Definition}

\theoremstyle{remark}
\newtheorem{remark}[theorem]{Remark}

\crefname{theorem}{theorem}{theorems}
\Crefname{theorem}{Theorem}{Theorems}
\crefname{proposition}{proposition}{propositions}
\Crefname{proposition}{Proposition}{Propositions}
\crefname{lemma}{lemma}{lemmas}
\Crefname{lemma}{Lemma}{Lemmas}
\crefname{corollary}{corollary}{corollaries}
\Crefname{corollary}{Corollary}{Corollaries}
\crefname{conjecture}{conjecture}{conjectures}
\Crefname{conjecture}{Conjecture}{Conjectures}
\crefname{equation}{equation}{equations}
\Crefname{equation}{Equation}{Equations}
\crefname{section}{section}{sections}
\Crefname{section}{Section}{Sections}

\newcommand{\R}{\mathbb R}
\newcommand{\C}{\mathbb C}
\newcommand{\Z}{\mathbb Z}
\newcommand{\Sph}{\mathbb S}
\newcommand{\RP}{\mathbb {RP}}
\newcommand{\CP}{\mathbb {CP}}
\newcommand{\Gr}{\operatorname{Gr}}
\newcommand{\Vol}{\operatorname{Vol}}
\newcommand{\vol}{\operatorname{vol}}
\newcommand{\im}{\operatorname{im}}
\newcommand{\tr}{\operatorname{tr}}
\newcommand{\pr}{\operatorname{pr}}
\newcommand{\sys}{\operatorname{sys}}
\newcommand{\arsinh}{\operatorname{arsinh}}

\newcommand{\dd}{\,\mathrm d}

\newcommand{\Htr}{\mathcal H}
\newcommand{\Vfun}{\mathcal V_\mu}

\newcommand{\ip}[2]{\left\langle #1,#2\right\rangle}
\newcommand{\norm}[1]{\left\lvert #1\right\rvert}

\title[Equators in Berger projective spaces]
{Minimal equators and homological systoles\\
in Berger projective spaces}

\author{Glen Wheeler}
\address{School of Mathematics and Physics, University of Wollongong, Northfields Avenue, Wollongong, NSW 2522, Australia}
\email{glenw@uow.edu.au}
\date{26 July 2026}

\subjclass[2020]{Primary 53C42; Secondary 53C20, 49Q20, 53C65}
\keywords{Berger metric, minimal submanifold, real projective space, K\"ahler angle, homological systole, integral geometry}

\begin{document}


\begin{abstract}
Gil-Medrano asked whether real projective subspaces remain minimal and
volume-minimising under Berger deformations in dimensions greater than
three.  We answer the minimality question and make substantial progress
on the corresponding minimisation problem.  The fundamental observation
is that the full linear geometry of a real subspace
$V\subset\C^N$ is controlled by the single skew-adjoint endomorphism
$A_V=\pr_VJ|_V$.  For every non-round Berger metric, this yields a
complete classification:
$\RP(V)$ is minimal if and only if $V$ is a linear CR subspace,
equivalently if all of its multiple K\"ahler angles are $0$ or $\pi/2$.
Thus the answer to Gil-Medrano's minimality question is negative in
general, although every equatorial hypersurface remains minimal.  The
same structure determines the critical points and global extrema of the
linear volume functional.  Combining it with projective integral
geometry, we also compute exact mod-two homological systoles in the pure
stretched and squashed regimes.
Our final contribution is a conjecture that all remaining cases consist of mixed CR equators.
\end{abstract}

\maketitle

\section{Introduction}

Let $\Sph^{2N-1}=\{p\in\C^N:\norm p=1\}$, where $N\ge2$, carry its round metric $g$.  The Hopf vector field is $\xi(p)=Jp$, where $J$ is multiplication by $\sqrt{-1}$, and the associated one-form is $\eta=g(\xi,\cdot)$.  For $\mu>0$, the Berger metric is
\begin{equation}\label{eq:berger-intro}
 g_\mu=g+(\mu-1)\eta\otimes\eta.
\end{equation}
Thus the horizontal metric is unchanged and the Hopf direction has squared length $\mu$.  The antipodal map is an isometry, and so $g_\mu$ descends to $\RP^{2N-1}$.

For a real $d$-dimensional subspace $V\subset\C^N$, set $M_V=\Sph(V)=V\cap\Sph^{2N-1}$ and $\RP(V)=M_V/\{\pm1\}$.
The submanifold $M_V$ is an equatorial $(d-1)$-sphere for the round metric.  In the round projective space, its quotient $\RP(V)$ is totally geodesic and is a volume minimiser in the non-trivial mod-two homology class.  Berger and Fomenko proved the corresponding global minimisation theorem and its rigidity statement; see \cite{Berger1972,Fomenko1972,Le1993}.

Gil-Medrano asked whether these projective subspaces remain minimal, and whether they remain the only volume-minimising cycles, for the Berger metrics in dimensions greater than three \cite{MorganPansu2018}.  In $\RP^3$, the projections of equatorial two-spheres are minimal but not totally geodesic when $\mu\ne1$, and Gil-Medrano proved that they are precisely the area-minimising projective planes \cite{GilMedrano2016}; see also \cite{BorrelliGilMedrano2010,BrayBrendleEichmairNeves2010}.  Minimal surfaces and higher-dimensional minimal submanifolds in Berger spheres have otherwise been studied from several points of view, including construction, stability and Morse index \cite{Torralbo2012,TorralboUrbano2022}.  A recent survey article of Ambrozio places metrics with minimal equators, their Radon transforms and related projective inverse problems in a broader programme \cite{Ambrozio2026}.  To the best of our knowledge, the higher-codimensional linear question above has not previously been settled.

The point of this paper is to address this by exploiting a new observation: that the full linear geometry is controlled by one skew-adjoint endomorphism.  
Let $\pr_V$ denote round orthogonal projection onto $V$ and define
\begin{equation}\label{eq:A-intro}
 A_V=\pr_VJ|_V\colon V\longrightarrow V.
\end{equation}
Then $A_V\in\mathfrak{so}(V)$, and the singular values of $A_V$ are the cosines of the multiple K\"ahler angles of $V$.  We use the trace convention for mean curvature: $\Htr_\mu=\tr_{g_\mu|_{M_V}}B^\mu$.

Our first result gives the mean curvature and the exact minimality criterion.

\begin{theorem}[Mean curvature and minimality]\label{thm:intro-minimality}
Let $V\subset\C^N$ be a real $d$-plane, let $p\in M_V$, and put $A=A_V$.  Then
\[
 \Htr_\mu(p)
 =\frac{2(\mu-1)}{1+(\mu-1)\norm{Ap}^2}
   \pr_{V^\perp}(JAp).
\]
In particular, if $\mu\ne1$, the following are equivalent:
\begin{enumerate}
 \item $M_V\subset(\Sph^{2N-1},g_\mu)$ is minimal;
 \item $\RP(V)\subset(\RP^{2N-1},g_\mu)$ is minimal;
 \item $J(\im A)\subset V$;
 \item there is an orthogonal splitting $V=C\oplus R$ such that $JC=C$ and $JR\perp V$.
\end{enumerate}
Equivalently, every multiple K\"ahler angle of $V$ belongs to $\{0,\pi/2\}$.
\end{theorem}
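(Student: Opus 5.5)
The plan is a direct second fundamental form computation. On the round sphere the submanifold $M_V$ is totally geodesic, so the whole mean curvature comes from the difference tensor $D=\nabla^\mu-\nabla$ of the two Levi-Civita connections. With $h=(\mu-1)\eta\otimes\eta$, Koszul's formula gives
\[
2g_\mu(D(X,Y),Z)=(\nabla_Xh)(Y,Z)+(\nabla_Yh)(X,Z)-(\nabla_Zh)(X,Y).
\]
On the round sphere $\nabla_X\xi=JX$, which is tangent whenever $X\perp p$. Hence $(\nabla_Xh)(Y,Z)=(\mu-1)[g(JX,Y)\eta(Z)+\eta(Y)g(JX,Z)]$, and the skew-symmetry of $J$ cancels terms to leave
\[
g_\mu(D(X,Y),Z)=(\mu-1)\bigl[\eta(Y)g(JX,Z)+\eta(X)g(JY,Z)\bigr].
\]

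Next I restrict to $T_pM_V=V\cap p^\perp$. The key observation is that the tangential part of $\xi(p)=Jp$ is $\pr_VJp=Ap$. This already lies in $p^\perp$, since $g(Ap,p)=g(Jp,p)=0$. So $\eta|_{TM_V}=g(Ap,\cdot)$. I then take the $g_\mu|_{M_V}$-trace of $D$. This only needs the $g_\mu$-dual of $\eta|_{TM_V}$, which by Sherman--Morrison is
\[
w=\frac{Ap}{1+(\mu-1)\norm{Ap}^2}.
\]
Thus $\tr_{g_\mu|_{M_V}}D$ is determined, as a covector, by $Z\mapsto 2(\mu-1)g(Jw,Z)$. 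The mean curvature $\Htr_\mu$ is the $g_\mu$-normal part of this trace.

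The step I expect to be the main obstacle is identifying this normal part precisely. The $g_\mu$-normal space of $M_V$ in $\Sph^{2N-1}$ is not simply $V^\perp\cap p^\perp$, because $\eta$ need not vanish on $V^\perp$. I would handle this as follows.
\begin{enumerate}
\item Write the candidate vector $n=c\,\pr_{V^\perp}(JAp)$ and correct it by a multiple of the ambient Hopf direction, so that it is $g_\mu$-orthogonal to $T_pM_V$ and tangent to the sphere.
\item Check that the correction is invisible when pairing against $g_\mu$-normal vectors.
\item Verify the identity $g_\mu(\Htr_\mu,\nu)=2(\mu-1)g(Jw,\nu)$ for all $g_\mu$-normal $\nu$.
\end{enumerate}
The facts I expect to use are that $J^2=-1$, that $g(JAp,Ap)=0$, and that $g(JAp,p)=-\norm{Ap}^2$. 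Concretely, I would split $JAp=\pr_V(JAp)+\pr_{V^\perp}(JAp)$. Then I would show that the $V$-component, together with the radial and Hopf pieces, is absorbed into $T_pM_V$ plus the $g_\mu$-orthogonal complement. This should yield exactly the stated formula.

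The equivalences then follow.
\begin{enumerate}
\item (i)$\Leftrightarrow$(ii): the covering $M_V\to\RP(V)$ is a local isometry for $g_\mu$.
\item (i)$\Leftrightarrow$(iii): since $\mu\neq1$ and the prefactor is positive, minimality is equivalent to $\pr_{V^\perp}JAp=0$ for all unit $p\in V$. By linearity this means $JAx\in V$ for all $x\in V$, that is, $J(\im A)\subset V$.
\item (iii)$\Rightarrow$(iv): for $x\in V$ we get $A(Ax)=\pr_VJAx=JAx$, so $A=J$ on $C:=\im A$, and $C$ is $J$-invariant. Put $R=C^\perp\cap V=\ker A$, using that $A$ is skew. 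For $r\in R$ we have $\pr_VJr=Ar=0$, so $JR\perp V$.
\item (iv)$\Rightarrow$(iii): given such a splitting, $A=J$ on $C$ and $A=0$ on $R$, so $J(\im A)=JC=C\subset V$.
\end{enumerate}
Finally, (iv) says that $A$ has singular values only in $\{0,1\}$. These are the cosines of the multiple Kähler angles, so every Kähler angle lies in $\{0,\pi/2\}$, and conversely.
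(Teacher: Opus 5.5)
Your route is the paper's. You use the difference tensor of \cref{lem:connection} and the Sherman--Morrison trace with $w=Ap/(1+(\mu-1)\norm{Ap}^2)$, then take a $g_\mu$-normal projection. For the equivalences you give the same argument as \cref{thm:CR-classification}.

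The gap is in the step you flag yourself. You never identify the normal part, and the plan you sketch for it points the wrong way. No Hopf correction is needed, because $L_p=\pr_{V^\perp}(JAp)$ is already horizontal:
\[
 \ip{L_p}{Jp}=\ip{JAp}{Jp}-\ip{A^2p}{Jp}=\ip{Ap}{p}-\ip{A^2p}{Ap}=0.
\]
This uses $\pr_V(JAp)=A^2p$, $\pr_V(Jp)=Ap$ and the skewness of $A$; this is where your fact $g(JAp,Ap)=0$ actually enters. Since $L_p$ is $g$-orthogonal to $V$ and horizontal, it is $g_\mu$-orthogonal to $T_pM_V$. The complementary piece $A^2p+\norm{Ap}^2p$ of the sphere-tangent vector $JAp+\norm{Ap}^2p$ lies in $V\cap p^\perp=T_pM_V$. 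Hence the $g_\mu$-normal part is exactly $L_p$; this is the decomposition \eqref{eq:phi-decomposition}.

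Your step (2), on the other hand, fails for any non-zero correction $t\xi$. Indeed $g_\mu(t\xi,\nu)=t\mu\,\eta(\nu)$, and unless $Jp\in V$ there are $g_\mu$-normal $\nu$ with $\eta(\nu)\neq0$. The plan works only because $t=0$, and that is precisely the missing horizontality statement.

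There are also three smaller corrections.
\begin{enumerate}
 \item $\nabla^g_X\xi$ is not $JX$ but $\phi X=JX+\eta(X)p$. The vector $JX$ is tangent to the sphere only when $\eta(X)=0$; for $X=\xi$ one gets $JX=-p$. Your covector identities survive because you only pair with sphere-tangent vectors. However, turning the trace into a vector requires $\phi$, and passing from $g$- to $g_\mu$-pairings uses that $\phi X$ is horizontal.
 \item The prefactor has the sign of $\mu-1$, so it is not positive in general. What is positive is the denominator, $1+(\mu-1)\norm{Ap}^2\ge\min\{1,\mu\}$.
 \item The converse in the K\"ahler-angle statement needs that $\norm{Ax}=\norm x$ forces $Jx\in V$, as in \cref{lem:endpoint-angles}.
\end{enumerate}
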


The spaces described in \cref{thm:intro-minimality} are the linear CR subspaces of $\C^N$.  The theorem immediately answers the first part of Gil-Medrano's question negatively: a general real subspace has intermediate K\"ahler angles and its equator is not minimal.  There are nevertheless two useful endpoint phenomena.  First, every real hyperplane $V\subset\C^N$ with unit normal $\nu$ has the form $V=\big(\operatorname{span}_{\R}\{\nu,J\nu\}\big)^\perp\oplus\R J\nu$, and therefore every equatorial hypersphere is minimal.  Secondly, a minimal linear equator is totally geodesic precisely when it is pure: either $V$ is complex or $V$ is totally real.  Mixed CR spaces give minimal but non-totally-geodesic examples in every admissible dimension.

The induced volume has an equally direct description.

\begin{theorem}[The linear volume functional]\label{thm:intro-volume}
Let $d=k+1$ and define $\Vfun(V)=\Vol_{g_\mu}(\RP(V))$ for $V\in\Gr_d(\R^{2N})$.
Then
\[
 \Vfun(V)
 =\frac12\int_{\Sph(V)}
   \sqrt{1+(\mu-1)\norm{A_Vp}^2}\,\dd\sigma(p).
\]
For $\mu\ne1$, a real $d$-plane is a critical point of $\Vfun$ if and only if it is a linear CR subspace, equivalently if and only if $\RP(V)$ is minimal.

Set $c_{\min}=\max\{0,d-N\}$ and $c_{\max}=\left\lfloor d/2\right\rfloor$.  If $\mu>1$, the global minimum of $\Vfun$ is attained exactly on the $U(N)$-orbit of $\C^{c_{\min}}\oplus\R^{d-2c_{\min}}$, and the global maximum exactly on the orbit with complex dimension $c_{\max}$.  If $0<\mu<1$, the roles of the two orbits are reversed.
\end{theorem}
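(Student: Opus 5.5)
The plan has three parts: compute the volume density, identify the critical points by differentiating along the Grassmannian, and then compare the critical values.

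\textbf{Volume formula.} Fix $p\in\Sph(V)$ and a $g$-orthonormal basis $e_1,\dots,e_k$ of $T_pM_V=V\cap p^\perp$. Then
\[
 g_\mu(e_i,e_j)=\delta_{ij}+(\mu-1)\eta(e_i)\eta(e_j),
 \qquad \eta(e_i)=\ip{Jp}{e_i}.
\]
Since $Jp\perp p$, the vector $\bigl(\eta(e_i)\bigr)_i$ is the tangential part of $Jp$, namely $\pr_VJp=Ap$ (the component along $p$ vanishes). The Gram matrix is therefore $I+(\mu-1)ww^T$ with $\norm w=\norm{Ap}$. Its determinant is $1+(\mu-1)\norm{Ap}^2$, which gives the density. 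The factor $\tfrac12$ comes from the double cover $M_V\to\RP(V)$.

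\textbf{Critical points.} First note that every $U(N)$-element is a $g_\mu$-isometry and maps $A_V$ to a conjugate. So $\Vfun$ depends only on the Kähler angles, and we may write $A_V$ in normal form with $2\times2$ blocks $\cos\theta_j\,\bigl(\begin{smallmatrix}0&-1\\1&0\end{smallmatrix}\bigr)$ plus zeros.

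For the first variation I would use the standard identity for a family of embedded submanifolds. The derivative of $\Vol_{g_\mu}(M_{V_t})$ is $-\int\ip{\Htr_\mu}{X}_{g_\mu}$, where $X$ is the variational field. For $V_t=e^{tS}V$ with $S\in\mathfrak{so}(2N)$ mapping $V\to V^\perp$, the field is $X(p)=Sp$.

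By \cref{thm:intro-minimality}, $\Htr_\mu$ is proportional to $\pr_{V^\perp}JAp$, which lies in $V^\perp$. So $\ip{\Htr_\mu}{Sp}_{g_\mu}$ is computed from the $g$-inner product plus the $\eta$-correction. For the "if" direction, CR means $\Htr_\mu\equiv0$, so $V$ is critical. For the converse, suppose $\pr_{V^\perp}JA\neq0$. Choose $S$ with $S|_V=\pr_{V^\perp}JA$, extended skew to $V^\perp$. The integrand then becomes
\[
 \frac{2(\mu-1)\,Q(p)}{1+(\mu-1)\norm{Ap}^2}
\]
with $Q(p)\ge0$ not identically zero; one must check that the $\eta$-correction keeps the sign. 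Hence the derivative is nonzero. Since the sign of $\mu-1$ factors out, this gives criticality if and only if CR for every $\mu\ne1$.

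\textbf{Global extrema.} I would compute $\Vfun$ as a function of $c_j=\cos^2\theta_j\in[0,1]$. Each block contributes $c_j(x_{2j-1}^2+x_{2j}^2)$ to $\norm{Ap}^2$. So $\Vfun=\tfrac12\int_{\Sph^{d-1}}\sqrt{1+(\mu-1)\sum_j c_jr_j^2}\,\dd\sigma$, where $r_j^2=x_{2j-1}^2+x_{2j}^2$.
\begin{itemize}
 \item This is strictly monotone in each $c_j$: increasing if $\mu>1$, decreasing if $\mu<1$.
 \item The admissible set of angle vectors is constrained. $V$ must contain a complex subspace of dimension at least $c_{\min}=\max\{0,d-N\}$, because $V\cap JV$ has real dimension at least $2d-2N$. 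This forces at least $c_{\min}$ of the $c_j$ to equal $1$.
 \item Every vector $(c_j)\in[0,1]^{\lfloor d/2\rfloor}$ with that many $1$'s is realised. One builds $V$ from blocks $\operatorname{span}\{e,\cos\theta\,Je+\sin\theta\,f\}$ with $f\perp\C e$, and the needed extra complex dimensions fit because $d\le 2N$ leaves room.
\end{itemize}
Monotonicity then puts the minimum (for $\mu>1$) at $c_j=1$ for exactly $c_{\min}$ indices and $0$ otherwise. Those spaces form the single $U(N)$-orbit of $\C^{c_{\min}}\oplus\R^{d-2c_{\min}}$. The maximum is at all $c_j=1$, i.e. complex dimension $c_{\max}$. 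Strict monotonicity gives the "exactly", and the case $\mu<1$ reverses everything.

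\textbf{Main obstacle.} The hardest step will be the converse in the critical-point classification: showing the first variation is nonzero for a suitable $S$. This requires controlling the $g_\mu$-correction term. If it is awkward, I would instead differentiate the explicit integral formula directly along $V_t$, computing $\frac{d}{dt}\norm{A_{V_t}p_t}^2$.
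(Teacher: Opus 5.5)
Your proposal is correct and follows essentially the paper's route: the matrix determinant lemma for the density, the first variation along $e^{tK}V$ with $K|_V=\pr_{V^\perp}JA_V$ extended skew-adjointly to $V^\perp$ (exactly the paper's $K$), and strict monotonicity in each $\lambda_j^2$ combined with $\dim_\R(V\cap JV)\ge2(d-N)$ for the extrema. The $\eta$-correction you flag is harmless, because $L_p=\pr_{V^\perp}(JA_Vp)$ is horizontal, and in any case $g_\mu(L_p,L_p)>0$ by positive-definiteness. Hence the derivative is $-(\mu-1)\int_{M_V}\frac{\norm{L_p}^2}{\sqrt{1+(\mu-1)\norm{A_Vp}^2}}\,\dd\sigma_g$, which is non-zero whenever $L\ne0$.
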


For a CR space $V=\C^c\oplus\R^r$, where $d=2c+r$, the volume formula in \cref{thm:intro-volume} evaluates to
\[
 \frac{\Vol_{g_\mu}(\RP(V))}{\Vol_g(\RP^{d-1})}
 ={}_2F_1\left(-\frac12,c;\frac d2;1-\mu\right).
\]
The purely totally real and purely complex cases give the ratios $1$ and $\sqrt\mu$, respectively.  In the mixed cases this is a genuinely non-constant beta integral.

Our partial answer to the global minimisation question is most naturally stated in terms of mod-two homological systoles.  Write $v_k=\Vol_g(\RP^k)=\pi^{(k+1)/2}/\Gamma((k+1)/2)$ and let
\[
 \sys^{\Z_2}_k(g_\mu)
 =\inf\big\{\mathbf M_{g_\mu}(T):
 T\in\mathcal Z_k(\RP^{2N-1};\Z_2),\ [T]\ne0\big\}.
\]

\begin{theorem}[Exact homological systoles]\label{thm:intro-systole}
Let $1\le k\le2N-2$.
\begin{enumerate}
 \item If $\mu>1$ and $k\le N-1$, then $\sys^{\Z_2}_k(g_\mu)=v_k$.  The value is attained by every totally real linear $\RP^k$.
 \item If $0<\mu<1$ and $k$ is odd, then $\sys^{\Z_2}_k(g_\mu)=\sqrt\mu\,v_k$.  The value is attained by every complex linear $\RP^k$.
\end{enumerate}
These are all equality cases.
\end{theorem}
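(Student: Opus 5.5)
The plan is to prove each part by pairing an upper bound, given by a linear cycle, with a matching lower bound from a calibration-type comparison. The upper bounds come from \cref{thm:intro-volume}. A totally real $V$ has $A_V=0$, so $\Vol_{g_\mu}(\RP(V))=v_k$; such a $(k+1)$-plane exists exactly when $k+1\le N$. A complex $V$ has $|A_Vp|=1$, so the volume is $\sqrt\mu\,v_k$, and this requires $k+1$ even, i.e.\ $k$ odd. Since each $\RP(V)$ generates $H_k(\RP^{2N-1};\Z_2)$, we obtain $\sys^{\Z_2}_k(g_\mu)\le v_k$ in case (i) and $\le\sqrt\mu\,v_k$ in case (ii).

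For the lower bounds I would combine a pointwise comparison of $k$-dimensional Jacobians with projective integral geometry (Crofton). In case (i), $\mu>1$ gives $g_\mu\ge g$ as quadratic forms. Hence for any rectifiable $k$-cycle $T$ we have $\mathbf M_{g_\mu}(T)\ge\mathbf M_g(T)$. The round systole is $v_k$: Crofton in $\RP^{2N-1}$ shows that a nontrivial mod-two $k$-cycle meets almost every projective $(2N-1-k)$-plane in an odd, hence nonzero, number of points, and integrating yields $\mathbf M_g(T)\ge v_k$. This is the Berger--Fomenko theorem quoted in the introduction. So (i) follows at once, and notably without needing $k\le N-1$ for the inequality; that hypothesis is used only for attainment.

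Case (ii) is the real obstacle, because $g_\mu\le g$ and the naive comparison runs the wrong way. The first attempt is a Jacobian estimate. For a $k$-plane $P\subset T_x\Sph^{2N-1}$, the $g_\mu$-Jacobian relative to $g$ is $\sqrt{1+(\mu-1)|\pr_P\xi|^2}\ge\sqrt\mu$, since $|\pr_P\xi|\le1$. This gives $\mathbf M_{g_\mu}(T)\ge\sqrt\mu\,\mathbf M_g(T)\ge\sqrt\mu\,v_k$. If this holds, it proves (ii) for all $k$, and the parity condition on $k$ is again needed only for attainment.

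I would verify this Jacobian formula carefully. Write $g_\mu=g+(\mu-1)\eta^2$. Restricted to $P$, this is a rank-one perturbation, with determinant ratio $1+(\mu-1)|\eta|_P|^2$, and $|\eta|_P|=|\pr_P\xi|\le1$. For $\mu<1$ the determinant ratio is therefore at least $\mu$. This matches the integrand in \cref{thm:intro-volume}. Passing to the quotient by $\pm1$ and to mod-two flat chains is routine via the area formula.

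Finally, for the "equality cases" clause I would check where the pointwise inequalities become equalities, which also yields the attaining cycles. Equality in (i) requires $\pr_P\xi=0$ almost everywhere together with round equality. By the Berger--Fomenko rigidity, the round minimisers are linear $\RP^k$, and these must then be totally real. In (ii), equality forces $\xi\in P$ almost everywhere plus round rigidity, which singles out complex linear $\RP^k$.
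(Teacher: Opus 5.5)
Your proposal is correct and follows essentially the paper's route: the rank-one Jacobian $\sqrt{1+(\mu-1)\norm{\pr_E\xi}^2}$ gives $\mathbf M_{g_\mu}(T)\ge\mathbf M_g(T)$ for $\mu>1$ and $\mathbf M_{g_\mu}(T)\ge\sqrt\mu\,\mathbf M_g(T)$ for $\mu<1$; the round Crofton bound $\mathbf M_g(T)\ge v_k$ completes the lower bound; and totally real (resp.\ complex) linear $\RP^k$, which exist exactly when $k\le N-1$ (resp.\ $k$ odd), attain it. Your equality-case discussion via Berger--Fomenko rigidity matches the paper's \cref{cor:systole-rigidity}, which the paper states separately and conditionally on that classical equality statement rather than as part of this theorem.
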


The proof is straightforward once the pointwise Jacobian is calculated.
For a $k$-plane $E\subset T_p\Sph^{2N-1}$,
\[
 \frac{\dd\vol_{g_\mu}|_E}{\dd\vol_g|_E}
 =\sqrt{1+(\mu-1)\norm{\pr_E\xi}^2}.
\]
For stretched fibres this is at least $1$, while for squashed fibres it is at least $\sqrt\mu$.  The remaining step is to apply  the round Crofton inequality: $\mathbf M_g(T)\ge v_k$ for every non-trivial mod-two $k$-cycle in projective space.

The two regimes left open are $\mu>1$ with $k>N-1$, and $0<\mu<1$ with $k$ even.  In both cases the linear minimiser from \cref{thm:intro-volume} is mixed CR and the elementary Jacobian comparison above is not sharp.  We formulate a precise conjecture asserting that these mixed CR projective spaces are globally volume minimising.  Gil-Medrano's theorem in $\RP^3$ is exactly the first non-trivial mixed case.

The paper is organised as follows.  In \cref{sec:preliminaries} we fix the Berger normalisation and record the linear algebra of $A_V$.  In \cref{sec:mean-curvature} we derive the connection and mean-curvature formula.  The CR classification and its geometric consequences are proved in \cref{sec:minimal-equators}.  The volume functional, its critical set and its extrema are treated in \cref{sec:volume}.  In \cref{sec:systoles} we prove the systolic theorem using integral geometry.  The mixed regimes and the main conjecture are discussed in \cref{sec:mixed}.

\section*{Acknowledgements}

The author gratefully acknowledges partial support via  ARC grants 
FT250100880 and DP250101080.

\section{Berger geometry and compressed complex structures}\label{sec:preliminaries}

\subsection{The Berger metric}

We regard $\C^N$ as $\R^{2N}$ with its Euclidean inner product $g=\ip\cdot\cdot$ and orthogonal complex structure $J$.  The same letter $g$ denotes the induced round metric on the unit sphere.  At $p\in\Sph^{2N-1}$, $\xi(p)=Jp$ and $\eta(X)=\ip{Jp}{X}$ for $X\in T_p\Sph^{2N-1}$.
The horizontal space is $\mathcal H_p=\ker\eta_p$.  The metric \eqref{eq:berger-intro} satisfies
\[
 g_\mu|_{\mathcal H_p}=g|_{\mathcal H_p},
 \qquad
 g_\mu(\xi,\xi)=\mu,
 \qquad
 g_\mu(\xi,\mathcal H_p)=0.
\]
Note that we do not normalise the Hopf field to have unit $g_\mu$-length; this convention keeps the deformation parameter equal to the squared vertical length.

For tangent vector fields, define
\begin{equation}\label{eq:phi-def}
 \phi X=\nabla^g_X\xi=JX+\eta(X)p.
\end{equation}
The field $\phi X$ is horizontal and tangent to the sphere.  Moreover, $g(\phi X,Y)=-g(X,\phi Y)$.

\subsection{Linear equators}

Fix a real $d$-plane $V\subset\C^N$, where $1\le d\le2N-1$.  At $p\in M_V$, $T_pM_V=V\cap p^\perp$.  The compression $A=A_V$ from \eqref{eq:A-intro} is skew-adjoint, since $\ip{Ax}{y}=\ip{Jx}{y}=-\ip{x}{Jy}=-\ip{x}{Ay}$ for $x,y\in V$.  It is also a contraction: $\norm{Ax}\le\norm x$.
Since $\ip{Ap}{p}=0$, the round tangential component of the Hopf field along $M_V$ is
\begin{equation}\label{eq:xi-tangent}
 \xi^\top(p)=Ap.
\end{equation}
We write $q(p)=\norm{Ap}^2$.

The skew-adjoint normal form gives an orthonormal basis of $V$ in which
\begin{equation}\label{eq:A-normal-form}
 A\simeq
 \lambda_1J_0\oplus\cdots\oplus\lambda_mJ_0\oplus0_{d-2m},
 \qquad
 J_0=\begin{pmatrix}0&-1\\1&0\end{pmatrix},
\end{equation}
where $m=\lfloor d/2\rfloor$ and $1\ge\lambda_1\ge\cdots\ge\lambda_m\ge0$.  If $d$ is even there is no final zero block; if $d$ is odd it has dimension one.  The numbers $\theta_j=\arccos\lambda_j\in[0,\pi/2]$ are the multiple K\"ahler angles of $V$.  They determine the $U(N)$-orbit of $V$; see Tasaki \cite{Tasaki2003}.  In the coordinates of \eqref{eq:A-normal-form},
\begin{equation}\label{eq:q-coordinates}
 q(p)=\sum_{j=1}^m\lambda_j^2
 \big(x_{2j-1}^2+x_{2j}^2\big),
 \qquad p=(x_1,\ldots,x_d)\in\Sph^{d-1}.
\end{equation}

The endpoint singular values have a direct geometric meaning.

\begin{lemma}\label{lem:endpoint-angles}
Let $A=A_V$.
\begin{enumerate}
 \item The $\lambda_j=1$ blocks form the maximal complex subspace of $V$, namely $V\cap JV$.
 \item $A=0$ if and only if $V$ is totally real, namely $JV\perp V$.
 \item All singular values of $A$ belong to $\{0,1\}$ if and only if $V$ has an orthogonal splitting $V=C\oplus R$ with $JC=C$ and $JR\perp V$.
\end{enumerate}
\end{lemma}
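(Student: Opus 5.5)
The plan is to study $A$ through the positive semidefinite operator $-A^2=A^{\mathsf T}A$ on $V$, which is diagonal in the normal form \eqref{eq:A-normal-form} with eigenvalues $\lambda_j^2$, each of multiplicity two, together with $0$ on the final block. The basic identity, for $x\in V$, is
\[
 \norm{Ax}^2=\norm{\pr_V Jx}^2=\norm{Jx}^2-\norm{\pr_{V^\perp}Jx}^2=\norm x^2-\norm{\pr_{V^\perp}Jx}^2 .
\]
Hence $\norm{Ax}=\norm x$ exactly when $Jx\in V$, and $Ax=0$ exactly when $Jx\perp V$. All three items will follow from this identity combined with the spectral decomposition of $-A^2$.

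For (i), I will show that the eigenspace $E_1=\ker(A^2+1)$ equals $V\cap JV$.
\begin{itemize}
 \item If $x\in V\cap JV$, then $Jx\in V$, so $Ax=Jx$ and $A^2x=J^2x=-x$.
 \item Conversely, let $x\in E_1$. Then $\norm{Ax}^2=\ip{-A^2x}{x}=\norm x^2$, so by the identity $Jx\in V$, giving $x\in V\cap J^{-1}V=V\cap JV$ (using $J^{-1}=-J$).
\end{itemize}
Since $-A^2\le 1$, this eigenspace is spanned by exactly the blocks with $\lambda_j=1$.

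For (ii), $A=0$ means $\ip{Jx}{y}=0$ for all $x,y\in V$, which is precisely the statement $JV\perp V$.

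For (iii), suppose every singular value lies in $\{0,1\}$. Take $C=\ker(A^2+1)$ and $R=\ker A$. These are orthogonal eigenspaces of the self-adjoint operator $-A^2$, and together they span $V$. By (i), $JC=C$. The identity applied to $x\in R$ gives $Jx\perp V$, so $JR\perp V$. Conversely, given a splitting $V=C\oplus R$ with $JC=C$ and $JR\perp V$, we have $A|_C=J|_C$ and $A|_R=0$. Then $-A^2$ is the identity on $C$ and zero on $R$, so the singular values of $A$ lie in $\{0,1\}$.

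The only step requiring care is the converse in (i). There one must pass from the spectral condition $\lambda=1$ to the equality case of the contraction inequality. This works because $-A^2=A^{\mathsf T}A$, so the equality case reduces to the identity above with nothing further to verify.
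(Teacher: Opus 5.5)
Your proof is correct and follows essentially the same route as the paper. The inclusion of $V\cap JV$ in the unit blocks comes from $A=J$ there. The reverse inclusion comes from the equality case $\norm{Ax}=\norm x\Rightarrow Jx\in V$ of the projection estimate, and (ii)--(iii) take $C$ to be the unit blocks and $R=\ker A$, just as you do. Phrasing the unit blocks as the eigenspace $\ker(A^2+1)$ of $-A^2=A^{\mathsf T}A$ is a slightly cleaner way to make them canonical, but the argument is the same.
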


\begin{proof}
The intersection $V\cap JV$ is $J$-invariant.  On this intersection, $A=J$, so it is contained in the sum of the unit singular-value blocks.  Conversely, if $\norm{Ax}=\norm x$, equality holds in the orthogonal projection estimate $\norm{Ax}\le\norm x$; hence $Jx\in V$.  This proves the first assertion.  The second is immediate from the definition of $A$.

If the singular values are in $\{0,1\}$, let $C$ be the sum of the unit blocks and $R=\ker A$.  The first assertion gives $JC=C$, while $Ar=0$ is equivalent to $Jr\perp V$.  The converse follows by restricting $A$ to $C$ and $R$.
\end{proof}

The complex dimension of $V\cap JV$ is constrained by the ambient dimension.

\begin{lemma}\label{lem:intersection-bound}
If $\dim_\R V=d$, then $\dim_\R(V\cap JV)\ge2(d-N)$.  Consequently, at least $c_{\min}=\max\{0,d-N\}$ of the blocks in \eqref{eq:A-normal-form} have $\lambda_j=1$.
\end{lemma}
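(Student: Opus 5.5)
The bound is a dimension count for the two subspaces $V$ and $JV$ inside $\R^{2N}$. Since $J$ is an isometry, $\dim_\R JV=d$. The Grassmann formula gives
\[
 \dim_\R(V\cap JV)=\dim V+\dim JV-\dim(V+JV)\ge 2d-2N,
\]
because $V+JV\subset\R^{2N}$. Together with the trivial bound $\dim(V\cap JV)\ge0$, this yields $\dim_\R(V\cap JV)\ge 2\max\{0,d-N\}=2c_{\min}$.

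To pass to the normal form, invoke \cref{lem:endpoint-angles}(i): the direct sum of the blocks in \eqref{eq:A-normal-form} with $\lambda_j=1$ is exactly $V\cap JV$. Each such block is a real $2$-plane, so the number of unit blocks equals $\tfrac12\dim_\R(V\cap JV)$, which is at least $c_{\min}$.

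One point needs care: the final zero block in \eqref{eq:A-normal-form}, when $d$ is odd, must not contribute to $V\cap JV$. On that one-dimensional kernel we have $A=0$, so $\norm{Ax}\neq\norm{x}$ for $x\ne0$. Hence the kernel meets $V\cap JV$ trivially, and $V\cap JV$ is precisely the span of the unit $2\times2$ blocks, as the first assertion of \cref{lem:endpoint-angles} states. The argument involves no real difficulty; the only step requiring attention is this bookkeeping between real dimensions and numbers of blocks.
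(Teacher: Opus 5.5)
Your proposal is correct and follows essentially the same route as the paper: the Grassmann dimension count for $V$ and $JV$ in $\R^{2N}$, followed by \cref{lem:endpoint-angles}(i) to identify $V\cap JV$ with the span of the unit singular-value blocks. Your explicit use of the trivial bound $\dim_\R(V\cap JV)\ge0$ to obtain $c_{\min}$, and your remark about the odd-dimensional kernel block, are harmless extra bookkeeping that the paper leaves implicit.
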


\begin{proof}
Both $V$ and $JV$ have real dimension $d$ in a space of real dimension $2N$, and therefore $\dim(V\cap JV)=\dim V+\dim JV-\dim(V+JV)\ge2d-2N$.
The intersection is complex, and \cref{lem:endpoint-angles} identifies it with the unit singular-value blocks.
\end{proof}

\section{Connection and mean curvature}\label{sec:mean-curvature}

We first relate the two Levi-Civita connections.  A version of this formula, in the unit vertical-field convention, appears in \cite[Lemma 1]{TorralboUrbano2022}.

\begin{lemma}[Connection formula]\label{lem:connection}
Let $\nabla^\mu$ and $\nabla^g$ denote the Levi-Civita connections of $g_\mu$ and $g$, respectively.  For tangent vector fields $X,Y$ on $\Sph^{2N-1}$,
\[
 \nabla^\mu_XY
 =\nabla^g_XY
 +(\mu-1)\big\{\eta(Y)\phi X+\eta(X)\phi Y\big\}.
\]
\end{lemma}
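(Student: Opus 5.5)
The plan is to use the uniqueness part of the fundamental lemma of Riemannian geometry. Define
\[
 D_XY=\nabla^g_XY+(\mu-1)\big\{\eta(Y)\phi X+\eta(X)\phi Y\big\}.
\]
If we show that $D$ is torsion-free and $g_\mu$-parallel, then $D=\nabla^\mu$. First, $D$ is a connection, because the correction term $S(X,Y)=(\mu-1)\{\eta(Y)\phi X+\eta(X)\phi Y\}$ is tensorial in both slots. Since $S$ is symmetric in $X,Y$ and $\nabla^g$ is torsion-free, $D$ is torsion-free as well.

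The main step is metric compatibility. We must verify that
\[
 X\big(g_\mu(Y,Z)\big)=g_\mu(D_XY,Z)+g_\mu(Y,D_XZ).
\]
Expand $g_\mu=g+(\mu-1)\eta\otimes\eta$. The left side equals
\[
 g(\nabla^g_XY,Z)+g(Y,\nabla^g_XZ)+(\mu-1)X\big(\eta(Y)\eta(Z)\big).
\]
Use $\eta(Y)=g(\xi,Y)$ and $\nabla^g_X\xi=\phi X$ from \eqref{eq:phi-def}. This gives
\[
 X\eta(Y)=g(\phi X,Y)+\eta(\nabla^g_XY),
\]
and the analogous identity for $Z$.

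On the right side, note that $\phi X$ and $\phi Y$ are horizontal, so $\eta(S(X,Y))=0$. Consequently $g_\mu(S(X,Y),Z)=g(S(X,Y),Z)$. The right side therefore becomes
\[
 g(\nabla^g_XY,Z)+(\mu-1)\eta(\nabla^g_XY)\eta(Z)+g(S(X,Y),Z)+(\text{the same with }Y\leftrightarrow Z).
\]
Subtracting, we are left to check that
\[
 (\mu-1)\big[g(\phi X,Y)\eta(Z)+\eta(Y)g(\phi X,Z)\big]
 =g(S(X,Y),Z)+g(Y,S(X,Z)).
\]
Expanding the right-hand side gives
\[
 (\mu-1)\big[\eta(Y)g(\phi X,Z)+\eta(X)g(\phi Y,Z)+\eta(Z)g(Y,\phi X)+\eta(X)g(Y,\phi Z)\big].
\]
By the skew-symmetry $g(\phi Y,Z)=-g(Y,\phi Z)$ recorded earlier, the two $\eta(X)$ terms cancel, and what remains matches the left-hand side exactly.

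The only delicate points are the two facts used: that $\phi$ is horizontal and that $\phi$ is skew-adjoint. Both follow from \eqref{eq:phi-def}. For horizontality, $\ip{Jp}{JX+\eta(X)p}=\ip{p}{X}+0=0$ for $X$ tangent to the sphere. For skew-adjointness, the $\eta(X)p$ part is normal to the sphere and so contributes nothing against a tangent $Y$, while $J$ is itself skew-adjoint. With these in hand, uniqueness of the Levi-Civita connection concludes the proof.
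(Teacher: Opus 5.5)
Your proof is correct, but it runs in the opposite direction from the paper's. The paper \emph{derives} the difference tensor $D=\nabla^\mu-\nabla^g$ from the Koszul-type identity $2g_\mu(D_XY,Z)=(\nabla^g_Xh)(Y,Z)+(\nabla^g_Yh)(X,Z)-(\nabla^g_Zh)(X,Y)$, where $h=(\mu-1)\eta\otimes\eta$. It then substitutes $(\nabla^g_X\eta)(Y)=g(\phi X,Y)$, and skew-adjointness of $\phi$ cancels the $\eta(Z)$ terms. Finally, horizontality of $\phi$ lets it replace $g$ by $g_\mu$ and read off $D$.

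You instead posit the candidate and \emph{verify} it. Torsion-freeness is immediate from the symmetry of $S$. Metric compatibility reduces to the same skew-adjointness cancellation, now of the $\eta(X)$ terms, and uniqueness of the Levi-Civita connection finishes the argument.

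Both arguments use exactly the same inputs: $\nabla^g_X\xi=\phi X$, and horizontality and skew-adjointness of $\phi$. The paper's version produces the correction term without knowing it in advance. Yours avoids quoting the six-term difference formula and makes torsion-freeness transparent, at the cost of needing the answer beforehand.

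One small point you use implicitly: for your $D$ to be a connection on $T\Sph^{2N-1}$, $S$ must be tangent-valued, i.e.\ $\phi X\perp p$. This is the tangency half of the fact the paper records after \eqref{eq:phi-def}. It follows from $\ip{JX}{p}=-\eta(X)$.
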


\begin{proof}
Put $a=\mu-1$ and let $D=\nabla^\mu-\nabla^g$.  Since $g_\mu=g+a\eta\otimes\eta$ and $\nabla^gg=0$, the difference-of-connections formula gives
\begin{align*}
 2g_\mu(D_XY,Z)
 &=a\big\{(\nabla^g_X\eta)(Y)\eta(Z)
       +\eta(Y)(\nabla^g_X\eta)(Z)\\
 &\qquad +(\nabla^g_Y\eta)(X)\eta(Z)
       +\eta(X)(\nabla^g_Y\eta)(Z)\\
 &\qquad -(\nabla^g_Z\eta)(X)\eta(Y)
       -\eta(X)(\nabla^g_Z\eta)(Y)\big\}.
\end{align*}
By \eqref{eq:phi-def}, $(\nabla^g_X\eta)(Y)=g(\phi X,Y)$.  Using the skew-adjointness of $\phi$, the terms proportional to $\eta(Z)$ cancel and the remaining terms reduce to $g_\mu(D_XY,Z)=a\{\eta(Y)g(\phi X,Z)+\eta(X)g(\phi Y,Z)\}$.  As $\phi X$ and $\phi Y$ are horizontal, their $g$- and $g_\mu$-pairings agree.  This proves the asserted connection formula.
\end{proof}

Let $B^\mu$ be the second fundamental form of $M_V$ in $(\Sph^{2N-1},g_\mu)$.  Because $M_V$ is totally geodesic for the round metric, \cref{lem:connection} gives
\begin{equation}\label{eq:B-pretrace}
 B^\mu(X,Y)
 =(\mu-1)
 \big(\eta(Y)\phi X+\eta(X)\phi Y\big)^{\perp_\mu},
 \qquad X,Y\in TM_V.
\end{equation}
The superscript $\perp_\mu$ denotes $g_\mu$-orthogonal projection onto the normal bundle.

\begin{proposition}[Mean-curvature formula]\label{prop:mean-curvature}
For $p\in M_V$,
\[
 \Htr_\mu(p)
 =\frac{2(\mu-1)}{1+(\mu-1)\norm{Ap}^2}
  \pr_{V^\perp}(JAp).
\]
The vector $\pr_{V^\perp}(JAp)$ is horizontal and hence has the same $g$- and $g_\mu$-norm.
\end{proposition}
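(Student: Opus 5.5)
The plan is to compute $\Htr_\mu$ directly from \eqref{eq:B-pretrace}, by taking the trace in a frame adapted to the tangential Hopf component $Ap$. The argument has three steps: find the induced metric and a good frame, compute the $g_\mu$-normal bundle and its projection, and then take the trace.

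\textbf{Step 1: the induced metric and frame.} At $p\in M_V$, write $T=T_pM_V=V\cap p^\perp$. For $X\in T$, $\eta(X)=\ip{Jp}{X}=\ip{Ap}{X}$, since $X\in V$. Hence the induced metric is
\[
 g_\mu|_T=g|_T+(\mu-1)\,\ip{Ap}{\cdot}\ip{Ap}{\cdot}.
\]
If $Ap=0$, then $\eta$ vanishes on $T$, so $B^\mu=0$ and the formula holds trivially. Otherwise, set $e_1=Ap/\norm{Ap}$ and complete it to a $g$-orthonormal basis $e_1,\dots,e_{d-1}$ of $T$. This basis is also $g_\mu$-orthogonal, with $g_\mu(e_1,e_1)=1+(\mu-1)q$, where $q=\norm{Ap}^2$, and $g_\mu(e_i,e_i)=1$ for $i\ge2$. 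Moreover $\eta(e_i)=0$ for $i\ge2$ and $\eta(e_1)=\norm{Ap}$. Therefore only the $(1,1)$ term survives in the trace:
\[
 \Htr_\mu=\frac{B^\mu(e_1,e_1)}{1+(\mu-1)q}
 =\frac{2(\mu-1)\norm{Ap}}{1+(\mu-1)q}\,(\phi e_1)^{\perp_\mu}
 =\frac{2(\mu-1)}{1+(\mu-1)q}\,(\phi Ap)^{\perp_\mu}.
\]

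\textbf{Step 2: identify $(\phi Ap)^{\perp_\mu}$.} We have $\phi Ap=JAp+\eta(Ap)p=JAp+\ip{Jp}{Ap}p$, and $\ip{Jp}{Ap}=\ip{Ap}{Ap}=q$. Thus $\phi Ap=JAp+qp$, which we decompose as
\[
 JAp+qp=\pr_V(JAp)+\pr_{V^\perp}(JAp)+qp.
\]
Here $\pr_V(JAp)=A(Ap)=A^2p$, and $\ip{A^2p}{p}=-q$, so $A^2p+qp\in V\cap p^\perp=T$. Hence $\phi Ap=(\text{tangent vector})+\pr_{V^\perp}(JAp)$.

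\textbf{Step 3: the main obstacle.} It remains to show that $g_\mu$-projection onto the normal bundle kills the tangent part and fixes $w:=\pr_{V^\perp}(JAp)$. This requires checking that $w$ is $g_\mu$-normal to $T$, and for that I first show that $w$ is horizontal: $\eta(w)=\ip{Jp}{w}=\ip{\pr_{V^\perp}Jp}{JAp}$. I will verify this vanishes via the identity $Jp=Ap+\pr_{V^\perp}Jp$ (using $\pr_V Jp=Ap$), which gives
\[
 \ip{\pr_{V^\perp}Jp}{JAp}=\ip{Jp}{JAp}-\ip{Ap}{JAp}=\ip{p}{Ap}-0=0,
\]
since $\ip{x}{Jx}=0$ for all $x$. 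Once $w$ is horizontal, $g_\mu(w,X)=g(w,X)+(\mu-1)\eta(w)\eta(X)=g(w,X)=0$ for $X\in T\subset V$, so $w$ is $g_\mu$-normal. The tangent part projects to zero, which gives the formula. Horizontality of $w$ also yields the final assertion that its $g$- and $g_\mu$-norms agree.
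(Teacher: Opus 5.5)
Your proposal is correct and follows essentially the same route as the paper: you trace \eqref{eq:B-pretrace} against the rank-one perturbed induced metric, decompose $\phi(Ap)=(A^2p+\norm{Ap}^2p)+\pr_{V^\perp}(JAp)$, and check that the normal piece is horizontal and hence $g_\mu$-normal. The differences are cosmetic: you diagonalise the induced metric with the adapted frame $e_1=Ap/\norm{Ap}$ where the paper uses Sherman--Morrison, and you verify horizontality via $\pr_{V^\perp}Jp=Jp-Ap$ rather than by expanding $\pr_{V^\perp}(JAp)=JAp-A^2p$.
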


\begin{proof}
On $T_pM_V$, let $\alpha=\eta|_{T_pM_V}$.  By \eqref{eq:xi-tangent}, its round dual is $\alpha^{\sharp_g}=Ap$, while the induced Berger metric is the rank-one perturbation $h_\mu=g|_{T_pM_V}+(\mu-1)\alpha\otimes\alpha$.  The Sherman--Morrison formula therefore gives $\alpha^{\sharp_{h_\mu}}=Ap/(1+(\mu-1)\norm{Ap}^2)$.
Tracing \eqref{eq:B-pretrace} with respect to $h_\mu$ and using linearity of $\phi$ yields
\[
 \Htr_\mu(p)
 =\frac{2(\mu-1)}{1+(\mu-1)\norm{Ap}^2}
   \big(\phi(Ap)\big)^{\perp_\mu}.
\]
Now $\eta(Ap)=\ip{Jp}{Ap}=\norm{Ap}^2$, so $\phi(Ap)=JAp+\norm{Ap}^2p$.  Since $\pr_V(JAp)=A(Ap)=A^2p$, we can decompose this as
\begin{equation}\label{eq:phi-decomposition}
 \phi(Ap)
 =\underbrace{\big(A^2p+\norm{Ap}^2p\big)}_{\in T_pM_V}
 +\underbrace{\pr_{V^\perp}(JAp)}_{=:L_p}.
\end{equation}
The first vector is tangent because $\ip{A^2p+\norm{Ap}^2p}{p}=-\norm{Ap}^2+\norm{Ap}^2=0$.
The normal vector $L_p$ is horizontal.  Indeed,
\begin{align*}
 \ip{L_p}{Jp}
 =\ip{JAp-A^2p}{Jp}
 =\ip{Ap}{p}-\ip{A^2p}{Ap}=0,
\end{align*}
where the last equality uses the skew-adjointness of $A$.  Thus $L_p$ is also $g_\mu$-normal to $M_V$, and the preceding expression gives the asserted mean-curvature formula.
\end{proof}

The norm of the mean curvature exposes the intermediate K\"ahler angles directly.

\begin{corollary}\label{cor:H-norm}
At $p\in M_V$,
\[
 \norm{\Htr_\mu(p)}^2_{g_\mu}
 =\frac{4(\mu-1)^2}
 {\big(1+(\mu-1)\norm{Ap}^2\big)^2}
 \Big(\norm{Ap}^2-\norm{A^2p}^2\Big).
\]
In the coordinates of \eqref{eq:A-normal-form},
\[
 \norm{Ap}^2-\norm{A^2p}^2
 =\sum_{j=1}^m\lambda_j^2(1-\lambda_j^2)
 \big(x_{2j-1}^2+x_{2j}^2\big).
\]
\end{corollary}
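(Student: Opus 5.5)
The plan is to take the norm of the formula in \cref{prop:mean-curvature}. Since $L_p=\pr_{V^\perp}(JAp)$ is horizontal, its $g_\mu$-norm equals its round norm. Hence
\[
 \norm{\Htr_\mu(p)}^2_{g_\mu}=\frac{4(\mu-1)^2}{\big(1+(\mu-1)\norm{Ap}^2\big)^2}\norm{L_p}^2,
\]
and the first claim reduces to the identity $\norm{L_p}^2=\norm{Ap}^2-\norm{A^2p}^2$.

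To prove that identity, I will use the orthogonal decomposition $JAp=\pr_V(JAp)+\pr_{V^\perp}(JAp)$. As in the proof of \cref{prop:mean-curvature}, the tangential part is $\pr_V(JAp)=A(Ap)=A^2p$. Because $J$ is orthogonal, $\norm{JAp}=\norm{Ap}$. Pythagoras then gives
\[
 \norm{L_p}^2=\norm{JAp}^2-\norm{\pr_V(JAp)}^2=\norm{Ap}^2-\norm{A^2p}^2,
\]
which is the first formula.

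For the coordinate expression, I will work in the basis of \eqref{eq:A-normal-form}. On the $j$-th block, $A$ acts as $\lambda_jJ_0$ and $A^2$ acts as $-\lambda_j^2 I_2$, while both vanish on the zero block. Therefore
\[
 \norm{A^2p}^2=\sum_j\lambda_j^4\big(x_{2j-1}^2+x_{2j}^2\big).
\]
Subtracting this from \eqref{eq:q-coordinates} gives the stated sum. There is no real obstacle here; the only point that needs care is remembering that horizontality of $L_p$ is what lets us use the round norm in place of the $g_\mu$-norm.
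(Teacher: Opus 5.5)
Your proposal is correct and follows the paper's own proof essentially verbatim: Pythagoras on $JAp=A^2p+\pr_{V^\perp}(JAp)$ with $\norm{JAp}=\norm{Ap}$, horizontality of $\pr_{V^\perp}(JAp)$ to pass from the round norm to the $g_\mu$-norm, and the block normal form for the coordinate identity. Decomposing $JAp$ directly, instead of $\phi(Ap)$ as the paper does, changes nothing, since the extra term $\norm{Ap}^2p$ is tangent.
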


\begin{proof}
The orthogonal decomposition in \eqref{eq:phi-decomposition} gives $\norm{\pr_{V^\perp}(JAp)}^2=\norm{JAp}^2-\norm{A^2p}^2=\norm{Ap}^2-\norm{A^2p}^2$.  The vector is horizontal, so the same identity holds for $g_\mu$.  The second formula follows from the block normal form.
\end{proof}

\section{Minimal linear equators}\label{sec:minimal-equators}

We now prove the classification in \cref{thm:intro-minimality}.

\begin{theorem}[CR classification]\label{thm:CR-classification}
Assume $\mu\ne1$.  Let $V\subset\C^N$ be a real subspace and $A=A_V$.  The following are equivalent:
\begin{enumerate}
 \item $M_V$ is minimal in $(\Sph^{2N-1},g_\mu)$;
 \item $\RP(V)$ is minimal in $(\RP^{2N-1},g_\mu)$;
 \item $\pr_{V^\perp}JA=0$;
 \item $J(\im A)\subset V$;
 \item $V$ is a linear CR subspace, namely $V=C\oplus R$ with $JC=C$ and $JR\perp V$;
 \item all multiple K\"ahler angles of $V$ lie in $\{0,\pi/2\}$.
\end{enumerate}
\end{theorem}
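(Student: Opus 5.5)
The plan is to run a cycle of equivalences using \cref{prop:mean-curvature}, \cref{cor:H-norm} and \cref{lem:endpoint-angles}. All the analytic content is already in the mean-curvature formula; what remains is linear algebra.

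\textbf{(i)$\Leftrightarrow$(ii).} The antipodal map is a $g_\mu$-isometry. The projection $M_V\to\RP(V)$ is therefore a local isometry that intertwines the ambient covering. Mean curvature is local, so minimality of $M_V$ and of $\RP(V)$ coincide.

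\textbf{(i)$\Leftrightarrow$(iii).} Since $\mu\ne1$ and the prefactor $2(\mu-1)/(1+(\mu-1)\norm{Ap}^2)$ is nonzero and finite ($1+(\mu-1)q\ge\min\{1,\mu\}>0$ because $q\le1$), \cref{prop:mean-curvature} shows that $M_V$ is minimal if and only if $\pr_{V^\perp}(JAp)=0$ for every unit $p\in V$. By linearity in $p$, this holds if and only if $\pr_{V^\perp}JA=0$ on $V$.

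\textbf{(iii)$\Leftrightarrow$(iv).} The condition $\pr_{V^\perp}JA=0$ says exactly that $JAx\in V$ for all $x\in V$, that is, $J(\im A)\subset V$.

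\textbf{(iii)$\Leftrightarrow$(vi).} Use \cref{cor:H-norm}: $\norm{\pr_{V^\perp}JAp}^2=\sum_j\lambda_j^2(1-\lambda_j^2)(x_{2j-1}^2+x_{2j}^2)$. This vanishes for all $p$ if and only if each $\lambda_j\in\{0,1\}$, that is, if and only if each $\theta_j\in\{0,\pi/2\}$. Choosing $p$ in a single block isolates each coefficient.

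\textbf{(v)$\Leftrightarrow$(vi).} This is precisely \cref{lem:endpoint-angles}(iii).

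As a consistency check, (v) also implies (iv) directly. One has $A|_C=J|_C$ and $A|_R=0$, so $\im A=C$ and $JC=C\subset V$.

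There is no real obstacle. The one point needing care is that (iii) is stated as an operator identity, whereas the mean-curvature formula only tests it at unit $p$. Linearity plus scaling handles this, and the nonvanishing of the prefactor requires $\mu>0$.
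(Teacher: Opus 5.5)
Your proof is correct. For (i)$\Leftrightarrow$(ii)$\Leftrightarrow$(iii)$\Leftrightarrow$(iv) and for (v)$\Leftrightarrow$(vi) it matches the paper, and you are slightly more careful in checking that $1+(\mu-1)q=(1-q)+q\mu>0$.

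The two arguments differ in the one step with real content: passing from the operator condition to the CR splitting.

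\textbf{The paper's route.} It works directly from (iv). It sets $C=\im A$ and $R=\ker A$, and gets $V=C\oplus R$ orthogonally from skew-adjointness. For $y\in C$ one has $Ay=\pr_VJy=Jy$, so $JC=C$. For $r\in R$, $Ar=0$ is exactly $Jr\perp V$. The normal form and \cref{lem:endpoint-angles} are used only at the end, to restate (v) in terms of angles.

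\textbf{Your route.} You pass through (vi) instead. You use the identity $\norm{\pr_{V^\perp}JAp}^2=\sum_j\lambda_j^2(1-\lambda_j^2)(x_{2j-1}^2+x_{2j}^2)$ underlying \cref{cor:H-norm}. You then hand the construction of the splitting to \cref{lem:endpoint-angles}(iii), which builds $C=V\cap JV$ from the unit blocks via part (i) of that lemma.

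\textbf{Comparison.} The paper's argument is coordinate-free and needs no spectral input. It also names the CR factors canonically as $\im A$ and $\ker A$. Yours depends on the skew normal form, but it is quantitative: it shows that the failure of minimality is weighted exactly by $\lambda_j^2(1-\lambda_j^2)$. That quantity is a natural measure of how far $V$ is from being CR.
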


\begin{proof}
The quotient map $\Sph^{2N-1}\to\RP^{2N-1}$ is a local isometry, so the first two assertions are equivalent.  By \cref{prop:mean-curvature}, minimality is equivalent to $\pr_{V^\perp}(JAp)=0$ for every $p\in\Sph(V)$.  By homogeneity this is equivalent to $\pr_{V^\perp}JA=0$, or $J(\im A)\subset V$.

Suppose $J(\im A)\subset V$.  Set $C=\im A$ and $R=\ker A$.  Since $A$ is skew-adjoint, $V=C\oplus R$ orthogonally.  If $y\in C$, then $Jy\in V$ by hypothesis, and hence $Ay=\pr_VJy=Jy$.  In particular $Jy\in C$, so $JC=C$.  If $r\in R$, then $0=Ar=\pr_VJr$, which is equivalent to $Jr\perp V$.  Thus $V$ is CR.  The converse is immediate: on $C$ one has $A=J$, while on $R$ one has $A=0$.  The final equivalence follows from \cref{lem:endpoint-angles}.
\end{proof}

\subsection{A negative answer to Gil-Medrano's question.}

The following two-plane gives a minimality obstruction in every ambient dimension $N\ge2$.  Choose orthonormal vectors $e_1,e_2\in\C^N$ such that $e_1,Je_1,e_2,Je_2$ are mutually orthonormal, and let
\[
 V_\theta
 =\operatorname{span}_{\R}
 \big\{e_1,\cos\theta\,Je_1+\sin\theta\,e_2\big\},
 \qquad0<\theta<\frac\pi2.
\]
Its unique K\"ahler-angle cosine is $\lambda=\cos\theta$.  Thus it is neither complex nor totally real and is not minimal for $\mu\ne1$.  More explicitly, at $p=e_1$, \cref{prop:mean-curvature} gives
\[
 \Htr_\mu(e_1)
 =\frac{2(\mu-1)\cos\theta\sin\theta}
 {1+(\mu-1)\cos^2\theta}\,Je_2.
\]
This disproves the assertion that all equatorial projective subspaces remain minimal.

\subsection{Hyperplanes and total geodesy}

\begin{corollary}[Equatorial hyperspheres]\label{cor:hyperplanes}
Every real hyperplane $V\subset\C^N$ defines a minimal equatorial hypersphere in every Berger sphere, and hence a minimal projective hyperplane in every Berger projective space.
\end{corollary}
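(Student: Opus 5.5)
The plan is to exhibit every real hyperplane as a linear CR subspace and then invoke \cref{thm:CR-classification}. Let $V\subset\C^N$ be a real hyperplane with round unit normal $\nu$, so $d=2N-1$. Since $J$ is orthogonal and $\ip{J\nu}{\nu}=0$, the vector $J\nu$ lies in $V$. Set
\[
 R=\R J\nu,\qquad C=\big(\operatorname{span}_{\R}\{\nu,J\nu\}\big)^\perp .
\]
Then $C\subset V$ and $V=C\oplus R$ orthogonally, because $C$ is the orthogonal complement of $J\nu$ inside $V=\nu^\perp$.

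Next I check the two CR conditions.
\begin{enumerate}
 \item The span of $\nu$ and $J\nu$ is a complex line, hence $J$-invariant. Its orthogonal complement is therefore also $J$-invariant, since $J$ is orthogonal. This gives $JC=C$.
 \item We have $J(J\nu)=-\nu$, which is orthogonal to $V$. This gives $JR\perp V$.
\end{enumerate}
Thus $V$ satisfies condition (v) of \cref{thm:CR-classification}.

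If $\mu\ne1$, that theorem yields minimality of $M_V$ in $(\Sph^{2N-1},g_\mu)$ and of $\RP(V)$ in $(\RP^{2N-1},g_\mu)$. If $\mu=1$, the equator $M_V$ is totally geodesic in the round sphere, so it is trivially minimal. Its quotient $\RP(V)$ is then also minimal, because the projection is a local isometry.

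As a cross-check, one can compute directly: $A_V$ vanishes on $J\nu$ and equals $J$ on $C$. Hence $JA_Vp\in C\subset V$, so $\pr_{V^\perp}(JA_Vp)=0$, and \cref{prop:mean-curvature} gives $\Htr_\mu\equiv0$.

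There is no real obstacle here. The only points needing care are confirming that $J\nu\in V$, which gives the correct splitting, and treating the round case $\mu=1$ separately, since it is excluded from the hypotheses of the classification theorem.
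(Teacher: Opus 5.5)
Your proposal is correct and follows the paper's proof essentially verbatim: the same splitting $C=\operatorname{span}_{\R}\{\nu,J\nu\}^\perp$, $R=\R J\nu$, the same two checks $JC=C$ and $JR=\R\nu\perp V$, and then an application of \cref{thm:CR-classification}. Your separate treatment of the round case $\mu=1$, which the classification theorem excludes, fills in a step the paper leaves implicit, and your cross-check via \cref{prop:mean-curvature} is also correct.
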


\begin{proof}
Let $\nu$ be a unit normal to $V$.  Then $V=C\oplus R$, where $C=\operatorname{span}_{\R}\{\nu,J\nu\}^\perp$ and $R=\R J\nu$.  The subspace $C$ is complex and $JR=\R\nu\perp V$.  Apply \cref{thm:CR-classification}.
\end{proof}

The mixed examples are generally not totally geodesic.

\begin{proposition}[Totally geodesic linear equators]\label{prop:totally-geodesic}
Assume $\mu\ne1$ and let $M_V$ be minimal.  Then $M_V$ is totally geodesic in $(\Sph^{2N-1},g_\mu)$ if and only if either
\begin{enumerate}
 \item $V$ is complex, or
 \item $V$ is totally real.
\end{enumerate}
The same statement holds after passage to projective space.
\end{proposition}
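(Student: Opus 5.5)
By \cref{thm:CR-classification}, minimality means $V=C\oplus R$ with $JC=C$ and $JR\perp V$, and $A$ acts as $J$ on $C$ and as $0$ on $R$. I will compute $B^\mu$ directly from \eqref{eq:B-pretrace} and decide when it vanishes identically. Throughout, fix $p\in M_V$ and write $p=c+r$ with $c\in C$ and $r\in R$, so that $Ap=Jc$ and $\norm{Ap}^2=\norm c^2$.

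\textbf{Sufficiency.} If $V$ is totally real, then $A=0$. Hence $\eta(X)=\ip{Jp}{X}=\ip{Ap}{X}=0$ for all $X\in T_pM_V$, and \eqref{eq:B-pretrace} gives $B^\mu=0$ at once. If $V$ is complex, then $Jp\in T_pM_V$ and $\phi X=JX+\eta(X)p$. For $X\in V\cap p^\perp$ we have $JX\in V$ and $p\in V$, so $\phi X\in V$. Since $\phi X$ is also tangent to the sphere, $\phi X\in T_pM_V$, and so its $g_\mu$-normal projection vanishes. Therefore $B^\mu=0$ in this case as well.

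\textbf{Necessity.} Suppose both $C\neq0$ and $R\neq0$. Then $C$ is complex, so $\dim C\ge2$, and $\dim R\ge1$. Choose a unit $c\in C$, a unit $r\in R$, and put $p=(c+r)/\sqrt2$. Take $X=Y=Ap=Jc/\sqrt2$; this lies in $T_pM_V$ because $Jc\in C\subset V$ and $\ip{Jc}{p}=0$. Then $\eta(X)=\norm{Ap}^2=1/2$. Equation \eqref{eq:B-pretrace} reduces to $B^\mu(X,X)=(\mu-1)\,\phi(Ap)^{\perp_\mu}$. By \eqref{eq:phi-decomposition} and minimality we have $L_p=0$, so $\phi(Ap)=A^2p+\norm{Ap}^2p$ is round-tangent.

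This is the main subtlety: the second fundamental form uses the $g_\mu$-normal projection, not the round one. A round-tangent vector has zero $g_\mu$-normal part, since $T_pM_V$ is the same subspace for both metrics. So this choice of $X$ gives nothing, and I must test other pairs.

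Instead I take $X=Ap$ and $Y$ a unit vector in $T_pM_V$ with $\eta(Y)=0$, for instance $Y=(c-r)/\sqrt2$, which is orthogonal to $p$ and satisfies $\ip{Jp}{Y}=\ip{Jc}{c}/2=0$. Then $B^\mu(X,Y)=(\mu-1)\,\eta(X)\,(\phi Y)^{\perp_\mu}$. We compute $\phi Y=JY=(Jc-Jr)/\sqrt2$. Here $Jc\in V$, while $Jr\perp V$ and $Jr\neq0$. So $\phi Y$ has round-normal component $-Jr/\sqrt2$, which is horizontal because $\ip{Jr}{Jp}=\ip{r}{p}\ne0$\ldots{} and this needs care. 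I therefore verify horizontality via $\ip{Jr}{Jp}=\ip{r}{c+r}/\sqrt2$, which is nonzero. So the normal part is not automatically horizontal, and the $g_\mu$-normal projection must be computed properly.

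To do this, I recall that the $g_\mu$-normal space is $\{n:\ g_\mu(n,T)=0\ \forall T\in T_pM_V\}$. Given the round decomposition $\phi Y=t+n_0$, I will show that the $g_\mu$-normal part of $\phi Y$ vanishes if and only if $\phi Y\in T_pM_V$. This holds because projection onto the normal bundle along the tangent space is well defined for any complement. Since $\phi Y\notin V$, we get $B^\mu(X,Y)\neq0$, and so $M_V$ is not totally geodesic.

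Finally, because the quotient map $\Sph^{2N-1}\to\RP^{2N-1}$ is a local isometry, the same conclusions hold for $\RP(V)$.
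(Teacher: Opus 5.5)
Your proof is correct and follows the paper's strategy: the pure cases are handled exactly as in the paper via \eqref{eq:B-pretrace}, and in the mixed case you pick a tangent pair with $\eta(Y)=0$ and $\eta(X)\neq0$. The one difference is the base point. The paper takes $p\in\Sph(C)$, $X=Jp$ and $Y\in R$. Then $JY$ is already horizontal ($\ip{JY}{Jp}=\ip{Y}{p}=0$) as well as round-normal, hence $g_\mu$-normal, and this gives $B^\mu(X,Y)=(\mu-1)JY$ explicitly. Your generic point $p=(c+r)/\sqrt2$ instead forces you to use that $g_\mu$-orthogonal projection onto the normal space has kernel exactly $T_pM_V$. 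That is true because $g_\mu$ is positive definite, so your conclusion stands. You should delete the self-contradictory sentence that infers horizontality from $\ip{Jr}{Jp}\neq0$, and the dead-end test with $X=Y=Ap$, since neither contributes to the argument.
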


\begin{proof}
Write $V=C\oplus R$ as in \cref{thm:CR-classification}.  If $R=0$, then $V$ is complex.  For tangent $X,Y$, both $\nabla^g_XY$ and the correction in the formula of \cref{lem:connection} lie in $V\cap p^\perp$, so the second fundamental form vanishes.  If $C=0$, then $\eta|_{TM_V}=0$, and \cref{lem:connection} again shows that the second fundamental form vanishes.

Suppose now that $C$ and $R$ are both non-zero.  Choose $p\in\Sph(C)$, set $X=Jp\in T_pM_V$, and take a non-zero $Y\in R$.  Then $\eta(X)=1$, $\eta(Y)=0$ and $\phi Y=JY$.  Moreover $JY\perp V$, hence is $g_\mu$-normal.  Formula \eqref{eq:B-pretrace} gives $B^\mu(X,Y)=(\mu-1)JY\ne0$.
Thus a mixed CR equator is not totally geodesic.  The quotient statement follows from local isometry.
\end{proof}

This agrees with the general classification of totally geodesic submanifolds in Berger spheres obtained by Torralbo and Urbano \cite[Proposition 5]{TorralboUrbano2022}.

\subsection{The moduli of minimal linear equators}

Let $d=2c+r$.  A CR subspace of type $(c,r)$ is unitarily congruent to $\C^c\oplus\R^r\subset\C^c\oplus\C^r\oplus\C^{N-c-r}$, where the real factor is the standard real form in $\C^r$.  Such a subspace exists if and only if $c,r\ge0$, $c+r\le N$ and $2c+r=d$.  Equivalently, $c_{\min}=\max\{0,d-N\}\le c\le c_{\max}=\left\lfloor d/2\right\rfloor$.

\begin{proposition}[Critical orbits]\label{prop:critical-orbits}
For every admissible pair $(c,r)$, the $U(N)$-orbit of CR $d$-planes of type $(c,r)$ is the homogeneous space
\[
 \mathcal M_{c,r}
 \cong
 \frac{U(N)}{U(c)\times O(r)\times U(N-c-r)}.
\]
For $\mu\ne1$, the set of minimal linear equators is the disjoint union of these orbits over the admissible range above.
\end{proposition}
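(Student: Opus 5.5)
The plan has three parts: show that $U(N)$ acts transitively on CR planes of a fixed type $(c,r)$, compute the stabiliser of the model plane, and then use \cref{thm:CR-classification} to identify the minimal linear equators with the union of these orbits.

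\emph{Transitivity.} Let $V=C\oplus R$ be a CR $d$-plane of type $(c,r)$, as in \cref{thm:CR-classification}. The splitting is canonical, since $C=V\cap JV$ by \cref{lem:endpoint-angles} and $R=C^\perp\cap V$; so the type is a $U(N)$-invariant.

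1. Choose a unitary basis $u_1,\dots,u_c$ of the complex subspace $C$.
2. Choose an orthonormal real basis $r_1,\dots,r_r$ of $R$.
3. Because $JR\perp V$ and $R\perp C$, the space $R$ is totally real and orthogonal to $C$. Since $C$ is complex, $JR\perp C$ as well, so $R\oplus JR$ is orthogonal to $C$.
4. Hence $u_1,\dots,u_c,r_1,\dots,r_r$ is a unitary (Hermitian orthonormal) family in $\C^N$. Extend it to a unitary basis.
5. The unitary map sending this basis to the standard basis carries $V$ to $\C^c\oplus\R^r$.

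So the orbit is all of the set of CR planes of type $(c,r)$.

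\emph{Stabiliser.} Let $U\in U(N)$ preserve $V_0=\C^c\oplus\R^r$.

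1. $U$ commutes with $J$, so it preserves $V_0\cap JV_0=\C^c$.
2. It therefore preserves the orthogonal complement $\R^r$ of $\C^c$ inside $V_0$, and hence its complex span $\C^r$.
3. It then preserves $\C^{N-c-r}$, the Hermitian complement of $\C^c\oplus\C^r$.
4. Consequently $U=U_1\oplus U_2\oplus U_3$ with $U_1\in U(c)$ and $U_3\in U(N-c-r)$.
5. The block $U_2\in U(r)$ preserves $\R^r$, and a unitary matrix preserving the real form is real, so $U_2\in O(r)$.

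The converse inclusion is clear, so the stabiliser is $U(c)\times O(r)\times U(N-c-r)$. The orbit is a compact homogeneous space, hence an embedded submanifold of $\Gr_d(\R^{2N})$, diffeomorphic to the stated quotient.

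\emph{Decomposition.} By \cref{thm:CR-classification}, with $\mu\ne1$, $\RP(V)$ is minimal exactly when $V$ is CR. By the moduli discussion preceding the proposition, together with \cref{lem:intersection-bound}, the admissible types are exactly $c_{\min}\le c\le c_{\max}$ with $r=d-2c$. The orbits are disjoint because the type is determined by $\dim_\R(V\cap JV)=2c$, which is a unitary invariant. Equivalently, the type is determined by the number of unit singular values of $A_V$.

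The only delicate point is the stabiliser computation, specifically the claim that preserving $\R^r$ forces the middle block to be real orthogonal. This follows because a unitary map $W$ of $\C^r$ with $W(\R^r)\subset\R^r$ has real matrix entries in the standard basis.
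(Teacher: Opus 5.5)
Your proof is correct and takes essentially the same route as the paper. Both arguments use the transitivity of $U(N)$ on a complex $c$-plane together with a totally real $r$-plane in its Hermitian complement, the stabiliser $U(c)\times O(r)\times U(N-c-r)$ of the model, and \cref{thm:CR-classification} for the final statement; your version fills in details the paper leaves implicit, namely the Hermitian orthonormality of the adapted basis, the canonical identification $C=V\cap JV$ that makes the orbits disjoint, and the reality of the middle block of the stabiliser.
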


\begin{proof}
The unitary group acts transitively on the choices of a complex $c$-plane and, in its Hermitian orthogonal complement, a totally real $r$-plane.  The stabiliser of the standard model is $U(c)\times O(r)\times U(N-c-r)$.  The final statement is \cref{thm:CR-classification}.
\end{proof}

\section{The linear volume functional}\label{sec:volume}

\subsection{Volume and K\"ahler angles}

The rank-one structure of the induced metric gives an exact formula.

\begin{proposition}[Volume formula]\label{prop:volume-formula}
Let $V\subset\C^N$ be a real $d$-plane.  On $M_V$, $\dd\vol_{g_\mu|_{M_V}}=\sqrt{1+(\mu-1)\norm{A_Vp}^2}\,\dd\sigma_g(p)$.  Consequently,
\[
 \Vol_{g_\mu}(\RP(V))
 =\frac12\int_{\Sph(V)}
 \sqrt{1+(\mu-1)\norm{A_Vp}^2}\,\dd\sigma_g(p).
\]
\end{proposition}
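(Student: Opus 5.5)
The plan is to compute the induced Berger metric on $T_pM_V$ as a rank-one perturbation of the round metric, and then take determinants with the matrix determinant lemma.

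\emph{Step 1: the induced metric.} Fix $p\in M_V$ and recall $T_pM_V=V\cap p^\perp$. As in the proof of \cref{prop:mean-curvature}, write $\alpha=\eta|_{T_pM_V}$. By \eqref{eq:xi-tangent}, the round dual of $\alpha$ is $Ap$. Indeed, for $X\in T_pM_V$,
\[
 \eta(X)=\ip{Jp}{X}=\ip{\pr_VJp}{X}=\ip{Ap}{X},
\]
and $Ap\perp p$, so $Ap\in T_pM_V$. Hence the induced metric is
\[
 h_\mu=g|_{T_pM_V}+(\mu-1)\,\alpha\otimes\alpha .
\]

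\emph{Step 2: the determinant.} Choose a $g$-orthonormal basis $e_1,\dots,e_{d-1}$ of $T_pM_V$. The Gram matrix of $h_\mu$ in this basis is $I+(\mu-1)vv^T$, where $v$ is the coordinate vector of $Ap$. The matrix determinant lemma gives
\[
 \det\big(I+(\mu-1)vv^T\big)=1+(\mu-1)\norm{Ap}^2 .
\]
This quantity is positive for every $\mu>0$, because $\norm{Ap}\le1$ by the contraction property. The Riemannian density ratio is the square root of this Gram determinant, so
\[
 \dd\vol_{g_\mu|_{M_V}}=\sqrt{1+(\mu-1)\norm{A_Vp}^2}\,\dd\sigma_g(p).
\]
Note that $\sigma_g$ here is the round volume measure on the unit sphere $\Sph(V)$, because $M_V$ carries the induced round metric.

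\emph{Step 3: passage to projective space.} The integrand is invariant under $p\mapsto -p$, since $\norm{A(-p)}=\norm{Ap}$. The antipodal map is a $g_\mu$-isometry, and $\Sph(V)\to\RP(V)$ is a two-sheeted Riemannian covering. Therefore the projective volume is half the spherical integral, which gives the stated formula.

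There is no real obstacle in this argument. The only point needing care is that $\alpha^{\sharp_g}=Ap$ is tangent to $M_V$, which is exactly \eqref{eq:xi-tangent}.
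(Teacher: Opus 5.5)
Your proposal is correct and follows the paper's proof essentially verbatim. In both, the induced metric on $T_pM_V$ is the rank-one perturbation $g+(\mu-1)\alpha\otimes\alpha$ with $\alpha^{\sharp_g}=A_Vp$, the matrix determinant lemma gives the Jacobian, and the two-to-one isometric antipodal quotient supplies the factor $\frac12$. You also spell out two details the paper leaves implicit: that $A_Vp$ is tangent to $M_V$, and that the determinant is positive by the contraction property.
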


\begin{proof}
On $T_pM_V$, the induced metric is $h_\mu=g+(\mu-1)\alpha\otimes\alpha$, where $\alpha=\eta|_{T_pM_V}$.  The matrix determinant lemma and \eqref{eq:xi-tangent} give $\det h_\mu/\det g=1+(\mu-1)\norm{\alpha^{\sharp_g}}^2=1+(\mu-1)\norm{A_Vp}^2$.  Taking square roots proves the first formula in \cref{prop:volume-formula}; the antipodal quotient is two-to-one and isometric, giving the second.
\end{proof}

In the normal form \eqref{eq:A-normal-form}, the volume is a symmetric function of $\lambda_1^2,\ldots,\lambda_m^2$.  It is strictly monotone in each variable.

\begin{lemma}[Angle monotonicity]\label{lem:angle-monotonicity}
Fix all K\"ahler-angle cosines except $\lambda_j$.  Then $\Vol_{g_\mu}(\RP(V))$ is strictly increasing in $\lambda_j^2$ if $\mu>1$, and strictly decreasing in $\lambda_j^2$ if $0<\mu<1$.
\end{lemma}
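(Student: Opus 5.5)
Our plan is to differentiate under the integral sign in the formula of \cref{prop:volume-formula}, written in the normal-form coordinates \eqref{eq:A-normal-form}. Since the K\"ahler angles determine the $U(N)$-orbit and the round measure $\dd\sigma$ on $\Sph(V)\cong\Sph^{d-1}$ is invariant under orthogonal changes of coordinates, we may write
\[
 \Vol_{g_\mu}(\RP(V))=F(t_1,\ldots,t_m):=\frac12\int_{\Sph^{d-1}}
 \sqrt{1+(\mu-1)\sum_{i=1}^m t_i\big(x_{2i-1}^2+x_{2i}^2\big)}\,\dd\sigma(x),
\]
where $t_i=\lambda_i^2\in[0,1]$ and we use \eqref{eq:q-coordinates}. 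The key remark is that the integrand is defined and smooth for each fixed $x$ as long as the radicand stays positive. The radicand is $1+(\mu-1)q$ with $q\in[0,1]$, since $A$ is a contraction. It lies between $\min\{1,\mu\}>0$ and $\max\{1,\mu\}$, uniformly in $x$ and in $t\in[0,1]^m$. Hence $F$ is smooth on a neighbourhood of the cube, and we may differentiate under the integral.

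Fix $j$ and vary $t_j$ with the other $t_i$ frozen. Then
\[
 \frac{\partial F}{\partial t_j}
 =\frac{\mu-1}{4}\int_{\Sph^{d-1}}
 \frac{x_{2j-1}^2+x_{2j}^2}{\sqrt{1+(\mu-1)q_t(x)}}\,\dd\sigma(x).
\]
The integral is strictly positive. The integrand is continuous, nonnegative and bounded below by $(x_{2j-1}^2+x_{2j}^2)/\sqrt{\max\{1,\mu\}}$, and this lower bound is positive on a nonempty open subset of the sphere. So the sign of $\partial F/\partial t_j$ is exactly the sign of $\mu-1$. Integrating this derivative along the segment in $t_j$ gives strict monotonicity in $\lambda_j^2$: increasing for $\mu>1$ and decreasing for $0<\mu<1$. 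Alternatively, one can avoid derivatives altogether by pointwise comparison. For $t_j<t_j'$ the integrand for $t_j'$ strictly exceeds the one for $t_j$ wherever $x_{2j-1}^2+x_{2j}^2>0$ when $\mu>1$, and is strictly smaller there when $\mu<1$. Integrating over a set of positive measure then gives strict inequality. I would present this pointwise version as the main argument, since it is cleaner.

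The only subtle point is the bookkeeping. We must make sure that fixing the other cosines while varying $\lambda_j$ is legitimate, i.e.\ that the volume really is a function of the $\lambda$'s alone. It is, because the orbit is determined by the angles; more directly, \cref{prop:volume-formula} together with the normal form expresses the volume as $F$, which is symmetric in the $t_i$. The ordering constraint $\lambda_1\ge\cdots\ge\lambda_m$ plays no role, because $F$ is symmetric and is defined on the whole cube. I expect no real obstacle here. The main care is in noting that strictness requires the weight $x_{2j-1}^2+x_{2j}^2$ to be nonzero on a set of positive measure, which is clear for $d\ge2$ whenever there is a $j$-th block at all.
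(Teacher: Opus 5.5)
Your proposal is correct and is essentially the paper's proof: differentiate the volume formula in the normal-form coordinates with respect to $\lambda_j^2$ to get $\frac{\mu-1}{4}\int_{\Sph^{d-1}}(x_{2j-1}^2+x_{2j}^2)/\sqrt{1+(\mu-1)q}\,\dd\sigma$, whose sign is that of $\mu-1$. Your pointwise-comparison variant, and your remarks on the uniform positivity of the radicand and on the symmetry of $F$ making the ordering $\lambda_1\ge\cdots\ge\lambda_m$ irrelevant, are slightly more careful versions of the same argument.
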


\begin{proof}
By \eqref{eq:q-coordinates} and \cref{prop:volume-formula}, differentiation under the integral gives
\begin{align*}
 \frac{\partial}{\partial(\lambda_j^2)}
 \Vol_{g_\mu}(\RP(V))
 =\frac{\mu-1}{4}
 \int_{\Sph^{d-1}}
 \frac{x_{2j-1}^2+x_{2j}^2}
 {\sqrt{1+(\mu-1)q(x)}}\,\dd\sigma(x).
\end{align*}
The integral is positive, and the sign is that of $\mu-1$.
\end{proof}

\subsection{Global extrema among linear projective spaces}

\begin{theorem}[Linear volume extrema]\label{thm:linear-extrema}
Let $1\le d\le2N-1$, set $c_{\min}$ and $c_{\max}$ as in \cref{thm:intro-volume}, and let $\mu\ne1$.
\begin{enumerate}
 \item If $\mu>1$, the minimum of $\Vfun$ on $\Gr_d(\R^{2N})$ is attained exactly on the orbit $V\cong\C^{c_{\min}}\oplus\R^{d-2c_{\min}}$, and the maximum exactly on the orbit $V\cong\C^{c_{\max}}\oplus\R^{d-2c_{\max}}$.
 \item If $0<\mu<1$, the two orbits are reversed.
\end{enumerate}
In particular, every linear volume extremiser is minimal.
\end{theorem}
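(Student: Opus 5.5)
The plan is to reduce the problem to a monotone function on a cube and read off its extrema. By \cref{prop:volume-formula} together with \eqref{eq:q-coordinates}, $\Vfun(V)$ depends only on the Kähler-angle cosines, since the $U(N)$-orbit of $V$ is determined by them. Define
\[
 F(t_1,\ldots,t_m)=\frac12\int_{\Sph^{d-1}}\sqrt{1+(\mu-1)\textstyle\sum_j t_j(x_{2j-1}^2+x_{2j}^2)}\,\dd\sigma(x)
\]
on the cube $[0,1]^m$, with $t_j=\lambda_j^2$. Then $\Vfun(V)=F(\lambda_1^2,\ldots,\lambda_m^2)$. Since $F$ is symmetric, the ordering of the $\lambda_j$ plays no role.

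The next step is to determine which points of the cube actually occur. By \cref{lem:intersection-bound}, every $V$ has at least $c_{\min}$ coordinates equal to $1$. Conversely, every tuple with at least $c_{\min}$ ones is realised by a suitable $V$. To see this, for each block with $\lambda_j<1$ take the span $\{e,\lambda_j Je+\sqrt{1-\lambda_j^2}f\}$ as in the example $V_\theta$, using fresh complex directions. For each unit block use a complex line, and for an odd $d$ add a final totally real direction.

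The dimension count needs checking here. Let $c$ be the number of unit blocks and $s$ the number of non-unit blocks. Each non-unit block with $\lambda_j>0$ uses two complex dimensions, a block with $\lambda_j=0$ is totally real, and so on. For the extremal points only CR realisations are needed, and those exist exactly for $c_{\min}\le c\le c_{\max}$ by the admissibility discussion preceding \cref{prop:critical-orbits}. Therefore I will only claim that every realisable tuple lies in
\[
 K=\{t\in[0,1]^m:\ \#\{j:t_j=1\}\ge c_{\min}\},
\]
and that the two CR tuples, namely $c_{\min}$ ones with the rest zero, and all ones, are realisable.

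Now suppose $\mu>1$. By \cref{lem:angle-monotonicity}, $F$ is strictly increasing in each $t_j$. Any $t\in K$ has some set $S$ of at least $c_{\min}$ coordinates equal to $1$. Lowering every coordinate outside $S$ to $0$ strictly decreases $F$ unless those coordinates were already $0$. Then lowering any further ones in $S$ down to exactly $c_{\min}$ ones again decreases $F$ strictly. Hence $F(t)\ge F(1^{c_{\min}},0,\ldots)$, with equality iff $t$ has exactly $c_{\min}$ ones and all other coordinates are $0$. This is precisely the orbit $\C^{c_{\min}}\oplus\R^{d-2c_{\min}}$, since Kähler angles determine the orbit by Tasaki \cite{Tasaki2003}.

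Similarly, raising all coordinates to $1$ gives the maximum, attained exactly when all $\lambda_j=1$. For odd $d$ this is the orbit $\C^{c_{\max}}\oplus\R$, and for even $d$ it is $\C^{d/2}$. For $0<\mu<1$ the monotonicity reverses and the same argument swaps the two orbits. Finally, both extremal orbits are CR, so they are minimal by \cref{thm:CR-classification}.

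The main subtlety is the strictness of the comparisons, which is what gives "exactly". This follows because each coordinate is moved monotonically along a segment on which $\partial F/\partial t_j$ has a strict sign. That sign is the content of \cref{lem:angle-monotonicity}, applied at every intermediate point of the segment, which is valid for arbitrary parameter values in $[0,1]$ since the formula there holds pointwise in the $t$'s.
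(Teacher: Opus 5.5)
Your proposal is correct and follows essentially the same route as the paper: the forced $c_{\min}$ unit cosines from \cref{lem:intersection-bound}, strict coordinatewise monotonicity from \cref{lem:angle-monotonicity}, admissibility of the two CR patterns, Tasaki's orbit classification, and \cref{thm:CR-classification} for minimality. Your hedged general realisability claim is in fact true, since the block construction uses $d-c\le N$ complex dimensions whenever $c\ge c_{\min}$, but the argument does not need it.
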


\begin{proof}
By \cref{lem:intersection-bound}, at least $c_{\min}$ of the K\"ahler singular values are forced to equal $1$.  Every other singular value lies in $[0,1]$.  For $\mu>1$, \cref{lem:angle-monotonicity} shows that the volume is minimised by taking exactly the forced $c_{\min}$ singular values equal to $1$ and all remaining singular values equal to $0$.  This is the first CR model in the statement.  The volume is maximised by taking every available singular value equal to $1$, giving the second model.  Both patterns are admissible: in the first case the real dimension of the totally real factor is $d-2c_{\min}$ and $c_{\min}+d-2c_{\min}\le N$; in the second case the real factor has dimension $0$ or $1$.

Strict angle monotonicity identifies the equality patterns.  The multiple K\"ahler angles classify the $U(N)$-orbits, so these are the only equality orbits.  When $0<\mu<1$, all monotonicities reverse.  The final assertion follows from \cref{thm:CR-classification}.
\end{proof}

\subsection{Critical points and minimality}

A general family of submanifolds can be minimal without being detected by a restricted family of variations.  For linear equators, however, one carefully chosen linear variation detects every failure of minimality.

\begin{theorem}[Critical set of the linear volume]\label{thm:critical-set}
Assume $\mu\ne1$.  A real $d$-plane $V$ is a critical point of $\Vfun\colon\Gr_d(\R^{2N})\longrightarrow\R$ if and only if $M_V$ is minimal, equivalently if and only if $V$ is a linear CR subspace.
\end{theorem}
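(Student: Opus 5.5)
The plan is to compute the first variation of $\Vfun$ along curves in the Grassmannian generated by linear maps, and to compare it with the first-variation formula for volume. Fix $V$ and a linear map $T\colon V\to V^\perp$, and set $V_t=\{x+tTx: x\in V\}$. Every tangent vector to $\Gr_d(\R^{2N})$ at $V$ has this form. Pulling back to $\Sph(V)$ via $F_t(x)=(x+tTx)/\norm{x+tTx}$ gives a variation of $M_V$ whose initial velocity at $p$ is the normal vector field $W(p)=Tp\in V^\perp$. The radial normalisation contributes only a tangential term, and that term vanishes at $t=0$ because $\ip{Tp}{p}=0$.

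The first variation formula in $(\Sph^{2N-1},g_\mu)$ then gives
\[
 \frac{d}{dt}\Big|_{t=0}\Vfun(V_t)
 =-\frac12\int_{\Sph(V)} g_\mu\big(\Htr_\mu(p),Tp\big)\,\dd\vol_{g_\mu}.
\]
The variation is normal at $t=0$, so we need not worry about tangential components. By \cref{prop:mean-curvature}, $\Htr_\mu(p)$ is horizontal and equal to a positive multiple $2(\mu-1)/(1+(\mu-1)q)$ of $\pr_{V^\perp}(JAp)$. The pairing with $Tp$ can therefore be computed in the round metric. For the converse it is harmless to verify the horizontality of $Tp$ only after $T$ has been chosen.

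If $V$ is CR, then $\Htr_\mu\equiv0$ by \cref{thm:CR-classification}, so every derivative vanishes and $V$ is critical. Conversely, the key choice is
\[
 T=\pr_{V^\perp}JA\colon V\to V^\perp,
\]
which is linear. This choice is why linear variations suffice: the mean curvature field is itself the restriction of a linear map. With it, the integrand is a positive multiple of $\norm{\pr_{V^\perp}(JAp)}^2$ times a density, where both the density $\sqrt{1+(\mu-1)q}$ and the prefactor $1/(1+(\mu-1)q)$ are positive. The sign is that of $-(\mu-1)$, which is nonzero. By \cref{cor:H-norm}, the integral equals a positive weighted integral of $\sum_j\lambda_j^2(1-\lambda_j^2)(x_{2j-1}^2+x_{2j}^2)$. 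This vanishes if and only if every $\lambda_j\in\{0,1\}$. So criticality forces $V$ to be CR.

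The main obstacle is justifying the first-variation step cleanly. There are two points. First, the curve $V_t$ must be a smooth curve in the Grassmannian, and the maps $F_t$ must be diffeomorphisms $\Sph(V)\to\Sph(V_t)$. Second, the velocity must be identified correctly as $Tp$. The antipodal quotient is harmless, since $F_t$ commutes with $-1$.

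As a fallback that avoids the general first variation formula, one can differentiate the explicit volume formula of \cref{prop:volume-formula} directly. Writing $A_{V_t}$ via the pullback, the derivative of $\norm{A_{V_t}F_t(p)}^2$ at $t=0$ involves $\ip{J p}{Tp}$-type terms. With the chosen $T$, these should reproduce $\norm{\pr_{V^\perp}JAp}^2$, up to terms that integrate to zero by skew-symmetry. I would check this computation against the example $V_\theta$ as a sanity test.
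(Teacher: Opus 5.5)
Your proposal is correct and follows the paper's proof. Both use the same linear variation with initial velocity $\pr_{V^\perp}JAp$, which turns the first-variation formula into $-(\mu-1)\int_{M_V}\norm{\pr_{V^\perp}(JAp)}^2(1+(\mu-1)q)^{-1/2}\dd\sigma_g$. The differences are cosmetic: you realise the curve as the graph $(I+tT)V$ with radial normalisation rather than as $e^{tK}V$ with $K$ the skew extension of $L$, and the factor $2(\mu-1)/(1+(\mu-1)q)$ you call a ``positive multiple'' actually has the sign of $\mu-1$, which you correctly account for afterwards.
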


\begin{proof}
Minimality implies stationarity under every variation, hence under variations through linear equators.  It remains to prove the converse.

Let $L=\pr_{V^\perp}JA\colon V\longrightarrow V^\perp$.  If $L\ne0$, define a skew-adjoint endomorphism of $\C^N=V\oplus V^\perp$ by
\[
 K=\begin{pmatrix}0&-L^*\\L&0\end{pmatrix}
\]
and set $V_t=e^{tK}V$.  The corresponding variation of $M_V$ has velocity $X(p)=Kp=Lp$.
As shown in the proof of \cref{prop:mean-curvature}, $Lp$ is horizontal and $g_\mu$-normal.  The first variation formula, together with the two-to-one quotient, gives
\begin{align}
 \left.\frac{\dd}{\dd t}\right|_{t=0}\Vfun(V_t)
 &=-\frac12\int_{M_V}
   g_\mu(\Htr_\mu,Lp)\,\dd\vol_{g_\mu}\notag\\
 &=-(\mu-1)
   \int_{M_V}
   \frac{\norm{Lp}^2}
   {\sqrt{1+(\mu-1)\norm{Ap}^2}}\,\dd\sigma_g.
 \label{eq:critical-variation}
\end{align}
The denominator is positive.  Since $\mu\ne1$, the derivative is non-zero whenever $L\ne0$.  Thus every critical point has $L=0$, which is the minimality condition in \cref{thm:CR-classification}.
\end{proof}

\begin{remark}\label{rem:gradient-direction}
Formula \eqref{eq:critical-variation} gives more than criticality.  For stretched fibres, the variation generated by $L$ strictly decreases volume; for squashed fibres, its negative does.  Thus $L=\pr_{V^\perp}JA$ is a canonical descent direction away from the CR locus.
\end{remark}

\subsection{Closed formulae for CR equators}

Let $V=\C^c\oplus\R^r$ with $d=2c+r$.  For $p\in\Sph(V)$, let $s$ be the squared norm of its projection to the complex factor.  Then $q(p)=s$.  Under normalised spherical measure, $s$ has the beta distribution with parameters $(c,r/2)$ when $c,r>0$.

\begin{proposition}[Hypergeometric volume]\label{prop:hypergeometric-volume}
Let $V=\C^c\oplus\R^r$ be a CR subspace and $d=2c+r$.  Then
\[
 \frac{\Vol_{g_\mu}(\RP(V))}{\Vol_g(\RP^{d-1})}
 ={}_2F_1\left(-\frac12,c;\frac d2;1-\mu\right).
\]
If $c,r>0$, equivalently
\[
 \frac{\Vol_{g_\mu}(\RP(V))}{\Vol_g(\RP^{d-1})}
 =\frac1{B(c,r/2)}
 \int_0^1
 \sqrt{1+(\mu-1)s}\,
 s^{c-1}(1-s)^{r/2-1}\,\dd s.
\]
For $c=0$ the ratio is $1$, and for $r=0$ it is $\sqrt\mu$.
\end{proposition}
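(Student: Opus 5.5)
The starting point is \cref{prop:volume-formula}, combined with the fact that on a CR space $A_V$ acts as $J$ on the complex factor and as $0$ on the real factor. Write $p=(z,x)\in\Sph(V)$ with $z\in\C^c$ and $x\in\R^r$. Then $A_Vp=Jz$, so $q(p)=\norm z^2=s$. Dividing the volume formula by $\Vol_g(\RP^{d-1})=\tfrac12\sigma(\Sph^{d-1})$ expresses the ratio as the expectation of $\sqrt{1+(\mu-1)s}$ under normalised spherical measure on $\Sph^{d-1}$.

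Next we identify the law of $s$. Realise a uniform point of $\Sph^{d-1}$ as $G/\norm G$, where $G$ is a standard Gaussian in $\R^{2c}\oplus\R^r$. Then $s=X/(X+Y)$ with $X\sim\chi^2_{2c}$ and $Y\sim\chi^2_r$ independent, so $s\sim\mathrm{Beta}(c,r/2)$ when $c,r>0$. This gives the integral formula directly. The two endpoint cases are immediate: for $c=0$ we have $s\equiv0$ and the ratio is $1$, while for $r=0$ we have $s\equiv1$ and the ratio is $\sqrt\mu$.

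To obtain the hypergeometric form, apply Euler's integral representation
\[
 {}_2F_1(a,b;c';z)=\frac{1}{B(b,c'-b)}\int_0^1 t^{b-1}(1-t)^{c'-b-1}(1-zt)^{-a}\dd t,
\]
valid for $\operatorname{Re}c'>\operatorname{Re}b>0$. Take $a=-\tfrac12$, $b=c$, $c'=d/2$ and $z=1-\mu$. Then $c'-b=r/2>0$ and $1-zt=1+(\mu-1)t$, which reproduces our integral exactly. Since $\mu>0$, we have $z<1$, so $1-zt>0$ on $[0,1]$ and the integrand is well defined. For $z\in(-\infty,1)$ the representation gives the analytic continuation of ${}_2F_1$, so the identity holds for all $\mu>0$.

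The endpoint cases also agree with the hypergeometric expression. For $c=0$ the series terminates at $1$. For $r=0$ we have $b=c'$, and ${}_2F_1(-\tfrac12,c;c;z)=(1-z)^{1/2}=\sqrt\mu$.

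The only step needing real care is the identification of the distribution of $s$, together with the normalisation constant $B(c,r/2)$. I would verify this constant independently by checking that the density integrates to $1$, or via the polar decomposition $\Sph^{d-1}\ni(\sqrt s\,\omega,\sqrt{1-s}\,\omega')$, which has Jacobian proportional to $s^{c-1}(1-s)^{r/2-1}\dd s$.
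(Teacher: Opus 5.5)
Your proposal is correct and follows the paper's proof: reduce to the expectation of $\sqrt{1+(\mu-1)s}$ via \cref{prop:volume-formula}, use the $\mathrm{Beta}(c,r/2)$ law of $s=\norm{z}^2$, and apply Euler's integral with $a=-\tfrac12$, $b=c$, $c'=d/2$, $z=1-\mu$. The paper leaves implicit both your chi-squared derivation of the beta law and your observation that Euler's integral supplies the analytic continuation of ${}_2F_1$ for $\mu>2$, beyond the radius of convergence of the series.
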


\begin{proof}
The beta-integral formula follows from \cref{prop:volume-formula} and the beta distribution of the squared length of the projection of a uniformly distributed point on $\Sph^{d-1}$ to a $2c$-dimensional subspace.  Euler's integral representation of the Gauss hypergeometric function gives the hypergeometric formula, since $1+(\mu-1)s=1-(1-\mu)s$.  The endpoint cases follow directly from $q\equiv0$ and $q\equiv1$.
\end{proof}

The formula also yields a useful expansion near the round metric.

\begin{proposition}[Near-round expansion]\label{prop:near-round}
Let $B=A^*A=-A^2$ and put $a=\mu-1$.  As $a\to0$,
\begin{align*}
 \frac{\Vol_{g_\mu}(\RP(V))}{\Vol_g(\RP^{d-1})}
 &=1+\frac{a}{2d}\tr B\\
 &\quad-\frac{a^2}{8d(d+2)}
 \big\{(\tr B)^2+2\tr(B^2)\big\}
 +O(a^3).
\end{align*}
The error is uniform on the Grassmannian.
\end{proposition}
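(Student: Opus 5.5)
The plan is to Taylor-expand the integrand of \cref{prop:volume-formula} in $a=\mu-1$ and then integrate the resulting polynomials in $q$ over the sphere using spherical moment identities. First I would record that $q(p)=\norm{Ap}^2=\ip{A^*Ap}{p}=\ip{Bp}{p}$, with $B=-A^2$ symmetric positive semidefinite and $0\le q\le1$ because $A$ is a contraction. Dividing the formula of \cref{prop:volume-formula} by $\Vol_g(\RP^{d-1})=\tfrac12\sigma(\Sph^{d-1})$ expresses the ratio as the normalised spherical average $\mathbb E\big[\sqrt{1+aq(p)}\big]$, where $p$ is uniform on $\Sph(V)\cong\Sph^{d-1}$.

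Next I would use the elementary expansion $\sqrt{1+x}=1+\tfrac x2-\tfrac{x^2}8+R(x)$ with $|R(x)|\le C|x|^3$ for $|x|\le\tfrac12$. Set $x=aq$. Since $0\le q\le1$, for $|a|\le\tfrac12$ the remainder is bounded by $C|a|^3$ pointwise. This bound is independent of $p$ and of $V$, which gives the claimed uniformity on the Grassmannian once we average. It then remains to compute $\mathbb E[q]$ and $\mathbb E[q^2]$.

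For the moments, the first is immediate from $\mathbb E[p_ip_j]=\delta_{ij}/d$, giving $\mathbb E[\ip{Bp}{p}]=\tr B/d$. The second moment is the one real computation, and I would do it by the Gaussian trick. Let $X$ be a standard Gaussian vector in $V$ and write $X=|X|p$, where $|X|$ and $p$ are independent. Then
\[
 \mathbb E\big[\ip{BX}{X}^2\big]=\mathbb E\big[|X|^4\big]\,\mathbb E\big[\ip{Bp}{p}^2\big],
 \qquad \mathbb E\big[|X|^4\big]=d(d+2).
\]
Diagonalising $B$ by an orthogonal change of variables, which preserves the Gaussian, gives eigenvalues $b_i$ and
\[
 \ip{BX}{X}=\textstyle\sum_i b_iX_i^2 .
\]
Using $\mathbb E[X_i^4]=3$ and the independence of the coordinates, $\mathbb E\big[(\sum_i b_iX_i^2)^2\big]=(\sum_i b_i)^2+2\sum_i b_i^2=(\tr B)^2+2\tr(B^2)$. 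Hence $\mathbb E[q^2]=\{(\tr B)^2+2\tr(B^2)\}/(d(d+2))$.

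Substituting both moments into the averaged expansion yields exactly the stated coefficients $a/(2d)$ and $-a^2/(8d(d+2))$, with an $O(a^3)$ error uniform in $V$. I expect the only delicate point to be the second-moment identity. The Gaussian factorisation handles it cleanly, and as a check the totally real case $B=0$ gives $1$, while the complex case $B=I$ gives $1+a/2-a^2/8$, matching $\sqrt\mu$.
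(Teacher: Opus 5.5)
Your proposal is correct and follows essentially the same route as the paper: Taylor-expand $\sqrt{1+aq}$ with $q=\ip{Bp}{p}$, insert the first and second spherical moments, and get uniformity from $0\le q\le 1$. You additionally justify the second-moment identity via the Gaussian factorisation, which the paper simply quotes.
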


\begin{proof}
Expand $\sqrt{1+aq}=1+\frac a2q-\frac{a^2}{8}q^2+O(a^3)$.  For $p$ uniformly distributed on $\Sph^{d-1}$, $\mathbb E\,\ip{Bp}{p}=\tr B/d$ and $\mathbb E\,\ip{Bp}{p}^2=((\tr B)^2+2\tr(B^2))/(d(d+2))$.  Insert these identities into the integral formula of \cref{prop:volume-formula}.  Uniformity follows from $0\le B\le I$.
\end{proof}

\subsection{The three-dimensional case}

When $N=2$ and $d=3$, every real hyperplane has CR type $(c,r)=(1,1)$, and all are congruent under $U(2)$.  Since $\Vol_g(\RP^2)=2\pi$, \cref{prop:hypergeometric-volume} gives $\Vol_{g_\mu}(\RP(V))=2\pi I_\mu$, where $I_\mu=\int_0^1\sqrt{\mu+(1-\mu)t^2}\,\dd t$.  The elementary antiderivative yields
\[
 I_\mu=
 \begin{cases}
 \displaystyle
 \frac12+
 \frac{\mu}{2\sqrt{1-\mu}}
 \arsinh\sqrt{\frac{1-\mu}{\mu}},
 &0<\mu<1,\\[2ex]
 1,&\mu=1,\\[1ex]
 \displaystyle
 \frac12+
 \frac{\mu}{2\sqrt{\mu-1}}
 \arcsin\sqrt{\frac{\mu-1}{\mu}},
 &\mu>1.
 \end{cases}
\]
Gil-Medrano proved that these mixed CR planes are not merely linear extrema: they minimise area globally in the non-trivial class of $H_2(\RP^3;\Z_2)$ for every $\mu>0$ \cite{GilMedrano2016}.

\section{Homological systoles}\label{sec:systoles}

We now pass from linear projective subspaces to arbitrary cycles.  Throughout this section, currents have coefficients in $\Z_2$.  This is the natural coefficient group because $H_k(\RP^{2N-1};\Z_2)\cong\Z_2$ for $0\le k\le2N-1$.
We use the standard mass notation $\mathbf M_h$ for a metric $h$; see Federer \cite{Federer1969}.

\subsection{The round projective inequality}

We recall the Crofton form of the Berger--Fomenko theorem.  Let $m=2N-1$ for the moment and let $T$ be a rectifiable mod-two $k$-cycle in $\RP^m$.  For a complementary real projective subspace $\RP(W)\cong\RP^{m-k}$ in general position, the slice $T\cap\RP(W)$ is zero-dimensional.  There is an $O(m+1)$-invariant probability measure $\nu$ on the space of such $W$ for which the Crofton formula reads
\begin{equation}\label{eq:crofton}
 \int \#\big(T\cap\RP(W)\big)\,\dd\nu(W)
 =\frac{\mathbf M_g(T)}{v_k}.
\end{equation}
The normalisation follows by taking $T$ to be a linear $\RP^k$, which meets a generic complementary projective space in one point.  Formula \eqref{eq:crofton} is a special case of the kinematic formula in homogeneous spaces; see Howard \cite{Howard1993} and L\^e \cite{Le1993}.  The rectifiable-current formulation follows by slicing and approximation.

If $[T]\ne0$, then the mod-two intersection number with the complementary generator is one.  Hence a generic transverse slice contains an odd, and therefore non-zero, number of points.  It follows from \eqref{eq:crofton} that
\begin{equation}\label{eq:round-lower-bound}
 \mathbf M_g(T)\ge v_k.
\end{equation}
This is the only part of the classical theorem needed for the value of the systoles below.  The equality statement of Berger and Fomenko says that equality forces $T$ to be a linear projective $k$-plane in the appropriate category \cite{Berger1972,Fomenko1972}.

\subsection{The Berger Jacobian}

\begin{lemma}[Pointwise Jacobian]\label{lem:pointwise-jacobian}
Let $E\subset T_p\Sph^{2N-1}$ be a $k$-plane.  Then
\[
 J_E(g_\mu,g)
 :=\frac{\dd\vol_{g_\mu}|_E}{\dd\vol_g|_E}
 =\sqrt{1+(\mu-1)\norm{\pr_E\xi}^2}.
\]
Consequently,
\[
 J_E(g_\mu,g)\ge
 \begin{cases}
  1,&\mu\ge1,\\
  \sqrt\mu,&0<\mu\le1.
 \end{cases}
\]
Equality in the first line with $\mu>1$ holds if and only if $\xi\perp E$, while equality in the second line with $\mu<1$ holds if and only if $\xi\in E$.
\end{lemma}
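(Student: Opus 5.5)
The plan is to repeat, for an arbitrary tangent $k$-plane, the rank-one determinant computation from \cref{prop:volume-formula}. Restricted to $E$, the Berger metric is
\[
 g_\mu|_E=g|_E+(\mu-1)\,\eta|_E\otimes\eta|_E .
\]
The round dual of the one-form $\eta|_E$ is $\pr_E\xi$, because $\eta(X)=\ip{\xi}{X}=\ip{\pr_E\xi}{X}$ for every $X\in E$. I will fix a $g$-orthonormal basis of $E$. In that basis the Gram matrix of $g_\mu|_E$ is $I+(\mu-1)vv^{T}$, where $v$ is the coordinate vector of $\pr_E\xi$. The matrix determinant lemma then gives the determinant $1+(\mu-1)\norm{\pr_E\xi}^2$. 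The ratio of Riemannian volume densities on $E$ is the square root of this Gram determinant, which is the stated formula for $J_E(g_\mu,g)$.

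For the lower bounds, set $t=\norm{\pr_E\xi}^2$. Since $\xi$ is a round unit vector, $0\le t\le\norm\xi^2=1$. The quantity $1+(\mu-1)t$ is affine in $t$, so I will treat the two regimes separately.
\begin{enumerate}
 \item If $\mu\ge1$, the expression is non-decreasing in $t$, so its minimum is $1$, attained at $t=0$. For $\mu>1$ it is strictly increasing, so equality forces $t=0$, that is, $\pr_E\xi=0$, which means $\xi\perp E$.
 \item If $\mu\le1$, the expression is non-increasing in $t$, so its minimum is $\mu$, attained at $t=1$. For $\mu<1$ it is strictly decreasing, so equality forces $\norm{\pr_E\xi}=\norm\xi$. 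Equality in the orthogonal projection estimate gives $\xi\in E$.
\end{enumerate}
Taking square roots preserves these inequalities and their equality cases.

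None of these steps is a real obstacle. The only point needing care is to check that the density ratio really is the square root of the Gram determinant relative to a $g$-orthonormal basis. I will also note explicitly that $\pr_E$ is the round projection, which is what makes $\norm\xi=1$ and the bound $t\le1$ available.
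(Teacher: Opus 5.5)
Your proof is correct and follows the paper's argument. The paper applies the matrix determinant lemma to the rank-one perturbation $g|_E+(\mu-1)\eta|_E\otimes\eta|_E$, whose round dual is $\pr_E\xi$, and then reads off the bounds from $0\le\norm{\pr_E\xi}^2\le1$. You additionally spell out the monotonicity and equality-case details that the paper leaves implicit.
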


\begin{proof}
On $E$, $g_\mu|_E=g|_E+(\mu-1)\eta|_E\otimes\eta|_E$.  The matrix determinant lemma gives $J_E(g_\mu,g)^2=1+(\mu-1)\norm{(\eta|_E)^{\sharp_g}}^2=1+(\mu-1)\norm{\pr_E\xi}^2$.
Since $0\le\norm{\pr_E\xi}^2\le1$, the bounds and equality cases follow.
\end{proof}

The same formula holds on projective space because the quotient is a local isometry.  Integrating over approximate tangent planes gives, for every rectifiable $k$-current $T$,
\begin{equation}\label{eq:mass-comparison}
 \mathbf M_{g_\mu}(T)\ge
 \begin{cases}
  \mathbf M_g(T),&\mu\ge1,\\
  \sqrt\mu\,\mathbf M_g(T),&0<\mu\le1.
 \end{cases}
\end{equation}

\subsection{Exact systoles in the pure regimes}

\begin{definition}\label{def:systole}
For $1\le k\le2N-2$, define
\[
 \sys_k^{\Z_2}(g_\mu)
 =\inf\big\{\mathbf M_{g_\mu}(T):
 T\in\mathcal Z_k(\RP^{2N-1};\Z_2),\ [T]\ne0\big\}.
\]
\end{definition}

\begin{theorem}[Stretched Hopf fibres]\label{thm:stretched-systole}
Suppose $\mu>1$ and $1\le k\le N-1$.  Then $\sys_k^{\Z_2}(g_\mu)=v_k$, and every totally real linear projective $k$-plane attains equality.
\end{theorem}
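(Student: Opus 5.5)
The proof has two halves: a lower bound $\sys_k^{\Z_2}(g_\mu)\ge v_k$ valid for every non-trivial cycle, and an explicit competitor attaining $v_k$.

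For the lower bound, let $T\in\mathcal Z_k(\RP^{2N-1};\Z_2)$ with $[T]\ne0$. Since $\mu>1$, the pointwise Jacobian bound of \cref{lem:pointwise-jacobian} gives $J_E(g_\mu,g)\ge1$ on every approximate tangent plane $E$ of $T$. Integrating, as recorded in \eqref{eq:mass-comparison}, yields $\mathbf M_{g_\mu}(T)\ge\mathbf M_g(T)$. The round Crofton inequality \eqref{eq:round-lower-bound} then gives $\mathbf M_g(T)\ge v_k$. Taking the infimum over $T$ proves $\sys_k^{\Z_2}(g_\mu)\ge v_k$. This part is routine once \eqref{eq:mass-comparison} is granted. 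The only point needing care is that the Jacobian comparison is applied to the $\Z_2$ rectifiable representative, using $\mathcal H^k$-a.e.\ approximate tangent planes, which is exactly how \eqref{eq:mass-comparison} was derived.

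For the upper bound, I construct a totally real linear projective $k$-plane. The hypothesis $k\le N-1$ gives $d=k+1\le N$, so $c_{\min}=0$. A totally real $d$-plane therefore exists: take $V=\R^d\subset\R^N\subset\C^N$, the standard real form, for which $JV\perp V$. By \cref{lem:endpoint-angles}, $A_V=0$. \Cref{prop:volume-formula} (equivalently, \cref{prop:hypergeometric-volume} with $c=0$) then gives $\Vol_{g_\mu}(\RP(V))=\Vol_g(\RP^{k})=v_k$.

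It remains to check that $\RP(V)$ represents the non-trivial class in $H_k(\RP^{2N-1};\Z_2)$. This holds because a linear $\RP^k$ meets a complementary linear $\RP^{2N-1-k}$ in general position in exactly one point, so its mod-two intersection number with the dual generator is $1$. Hence $\sys_k^{\Z_2}(g_\mu)\le v_k$, and equality follows.

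Finally, the claim that \emph{every} totally real linear projective $k$-plane attains equality follows from the same computation. For any totally real $V$ we have $A_V=0$, so the volume is again $v_k$. Alternatively, all such planes are $U(N)$-congruent by \cref{prop:critical-orbits}, and $U(N)$ acts by $g_\mu$-isometries.

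No step is a serious obstacle, since the statement does not claim that totally real planes are the \emph{only} minimisers. The one place needing attention is the dimension bookkeeping $k\le N-1\iff d\le N$, which is exactly what guarantees that a totally real $d$-plane exists and that the Jacobian bound is sharp on it ($\xi\perp E$ everywhere).
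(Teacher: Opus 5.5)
Your proposal is correct and follows the paper's proof. The lower bound comes from \eqref{eq:mass-comparison} together with the round Crofton bound \eqref{eq:round-lower-bound}, and equality is attained by a totally real $\RP(V)$ with $\dim V=k+1\le N$, which represents the non-trivial class. The only cosmetic difference is in the volume computation: you use $A_V=0$ in \cref{prop:volume-formula}, whereas the paper notes directly that the Hopf field is normal to $\RP(V)$, so $g_\mu$ induces the round metric. These are the same observation, since $\xi^\top=A_Vp$.
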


\begin{proof}
If $[T]\ne0$, then \eqref{eq:mass-comparison} and \eqref{eq:round-lower-bound} give $\mathbf M_{g_\mu}(T)\ge\mathbf M_g(T)\ge v_k$.
Because $k+1\le N$, there exists a totally real $(k+1)$-plane $V\subset\C^N$.  Along $\RP(V)$ the Hopf field is normal, so $g_\mu$ induces the round metric and $\Vol_{g_\mu}(\RP(V))=v_k$.
The cycle $\RP(V)$ represents the non-trivial mod-two homology class, proving equality.
\end{proof}

\begin{theorem}[Squashed Hopf fibres]\label{thm:squashed-systole}
Suppose $0<\mu<1$ and $k$ is odd.  Then $\sys_k^{\Z_2}(g_\mu)=\sqrt\mu\,v_k$, and every complex linear projective $k$-plane attains equality.
\end{theorem}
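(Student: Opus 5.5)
The argument mirrors the proof of \cref{thm:stretched-systole}: a lower bound from the pointwise Jacobian combined with the round Crofton inequality, and an upper bound from an explicit linear competitor.

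For the lower bound, take any mod-two $k$-cycle $T$ with $[T]\ne0$. The second case of \eqref{eq:mass-comparison}, which holds since $0<\mu<1$, gives $\mathbf M_{g_\mu}(T)\ge\sqrt\mu\,\mathbf M_g(T)$. The round inequality \eqref{eq:round-lower-bound} then gives $\mathbf M_g(T)\ge v_k$. Together these yield $\sys_k^{\Z_2}(g_\mu)\ge\sqrt\mu\,v_k$.

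For the upper bound, the parity hypothesis is used. Since $k$ is odd, $d=k+1$ is even. Moreover $k\le 2N-2$ forces $d\le 2N-1$, and evenness then gives $d\le 2N-2$, so there is a complex subspace $V\subset\C^N$ of complex dimension $d/2\le N-1$. The same applies to any complex $d$-plane, i.e.\ to every complex linear projective $k$-plane. For such $V$ we have $JV=V$, so $A_V=J|_V$ and $\norm{A_Vp}=1$ for every $p\in\Sph(V)$. Equivalently, the Hopf field $\xi(p)=Jp$ is tangent to $M_V$. By \cref{prop:volume-formula}, or the case $r=0$ of \cref{prop:hypergeometric-volume}, we get $\Vol_{g_\mu}(\RP(V))=\sqrt\mu\,v_k$.

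Finally, $\RP(V)$ is a linear $\RP^k$ and therefore represents the generator of $H_k(\RP^{2N-1};\Z_2)$. Hence it is an admissible competitor and attains the lower bound, which proves equality. The only points needing care are mild. The first is that \eqref{eq:mass-comparison} is applied to rectifiable cycles, so the infimum should be taken over rectifiable (or flat-approximable) cycles, exactly as in the stretched case. The second is the dimension count: oddness of $k$ is precisely what makes a complex $(k+1)$-plane exist, and this is where the equality case $\xi\in E$ of \cref{lem:pointwise-jacobian} can hold along an entire cycle.
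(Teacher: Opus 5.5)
Your proposal is correct and follows the paper's proof essentially verbatim: the lower bound combines the squashed case of \eqref{eq:mass-comparison} with the round bound \eqref{eq:round-lower-bound}, and equality comes from a complex linear $\RP^k$. Such a plane exists because $k$ is odd, and it has volume $\sqrt\mu\,v_k$ because the Hopf field is tangent to it.
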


\begin{proof}
For every non-trivial $T$, \eqref{eq:mass-comparison} and \eqref{eq:round-lower-bound} give $\mathbf M_{g_\mu}(T)\ge\sqrt\mu\,\mathbf M_g(T)\ge\sqrt\mu\,v_k$.
Since $k$ is odd, $k+1$ is even and there exists a complex subspace $V\subset\C^N$ of real dimension $k+1$.  The Hopf field is tangent to $\RP(V)$.  By \cref{lem:pointwise-jacobian}, or \cref{prop:hypergeometric-volume}, $\Vol_{g_\mu}(\RP(V))=\sqrt\mu\,v_k$.
Again $\RP(V)$ represents the non-trivial class.
\end{proof}

\begin{corollary}[Rigidity, using Berger--Fomenko]\label{cor:systole-rigidity}
Assume the equality statement in the classical round projective minimisation theorem.
\begin{enumerate}
 \item In \cref{thm:stretched-systole}, every minimiser is a totally real linear $\RP^k$, modulo $U(N)$.
 \item In \cref{thm:squashed-systole}, every minimiser is a complex linear $\RP^k$, modulo $U(N)$.
\end{enumerate}
\end{corollary}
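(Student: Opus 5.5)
The plan is to trace equality through the two inequalities used in \cref{thm:stretched-systole,thm:squashed-systole}. Let $T$ be a non-trivial mod-two $k$-cycle with $\mathbf M_{g_\mu}(T)$ equal to the systole. In the stretched case ($\mu>1$, $k\le N-1$) we have $v_k=\mathbf M_{g_\mu}(T)\ge\mathbf M_g(T)\ge v_k$, so both \eqref{eq:mass-comparison} and \eqref{eq:round-lower-bound} are equalities. In the squashed case ($0<\mu<1$, $k$ odd) the chain is $\sqrt\mu\,v_k=\mathbf M_{g_\mu}(T)\ge\sqrt\mu\,\mathbf M_g(T)\ge\sqrt\mu\,v_k$, with the same conclusion.

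The first step uses equality in the round bound, $\mathbf M_g(T)=v_k$. By the assumed Berger--Fomenko equality statement, $T=\RP(V)$ for some real $(k+1)$-plane $V\subset\C^N$. The second step uses equality in \eqref{eq:mass-comparison}. This inequality comes from integrating the pointwise Jacobian of \cref{lem:pointwise-jacobian} against $\|T\|_g$, so equality forces $J_E(g_\mu,g)$ to take its extremal value at $\|T\|_g$-almost every point, where $E$ is the approximate tangent plane. For $T=\RP(V)$ the tangent plane at $p$ is $V\cap p^\perp$, and by \eqref{eq:xi-tangent} we have $\pr_E\xi=A_Vp$.

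In the stretched case, the equality clause of \cref{lem:pointwise-jacobian} gives $A_Vp=0$ for almost every $p\in\Sph(V)$. By continuity and homogeneity this means $A_V=0$, so $V$ is totally real by \cref{lem:endpoint-angles}. In the squashed case the equality clause gives $\xi\in E$, that is $\norm{A_Vp}=1$ for all $p$. Then every singular value of $A_V$ equals $1$, and by \cref{lem:endpoint-angles} $V=V\cap JV$ is complex. Alternatively one can read this off \cref{prop:volume-formula} and \cref{lem:angle-monotonicity}: the volume equals $\sqrt\mu\,v_k$ only when all $\lambda_j=1$, since $q\le1$ pointwise with equality only on unit blocks, and for $k+1$ even there is no zero block. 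The phrase ``modulo $U(N)$'' then follows because $U(N)$ acts transitively on totally real $(k+1)$-planes and on complex $(k+1)$-planes (\cref{prop:critical-orbits} with $c=0$ or $r=0$).

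The main obstacle is the passage from ``$\mathbf M_g(T)=v_k$'' to ``$T$ is a linear $\RP^k$''. This is exactly the classical rigidity, and it holds in whatever category Berger--Fomenko provide, for example smooth or suitably regular cycles. That is why the corollary is stated conditionally. I would simply invoke it as a hypothesis, and keep the mass-comparison equality step measure-theoretic, meaning almost everywhere with respect to $\|T\|$, so that no extra regularity is needed beyond what the classical statement gives.
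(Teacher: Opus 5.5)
Your proposal is correct and follows the paper's proof. Equality in the chain $\mathbf M_{g_\mu}(T)\ge\mathbf M_g(T)\ge v_k$ (or its $\sqrt\mu$ analogue) forces $T=\RP(V)$ by the assumed Berger--Fomenko rigidity, and forces the tangent planes to be horizontal (respectively, to contain $\xi$) almost everywhere, so $V$ is totally real (respectively, complex). The only differences are cosmetic: you route the last step through $\pr_E\xi=A_Vp$ and \cref{lem:endpoint-angles}, with an explicit continuity argument from ``almost every $p$'' to ``every $p$'', whereas the paper argues directly that $Jp\perp V$ (respectively, $Jp\in V$) for all $p$.
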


\begin{proof}
In the stretched case, equality in $\mathbf M_{g_\mu}(T)\ge\mathbf M_g(T)\ge v_k$ forces $T$ to be a round linear projective plane and its approximate tangent planes to be horizontal almost everywhere.  If $T=\RP(V)$ is linear, horizontality means $Jp\perp V\cap p^\perp$ for all $p\in\Sph(V)$.  Since $Jp\perp p$, this is equivalent to $Jp\perp V$ for all $p$, hence $V$ is totally real.

In the squashed case, equality forces a round linear projective plane whose tangent spaces contain the Hopf direction.  Thus $Jp\in V$ for every $p\in V$, so $V$ is complex.
\end{proof}

\section{The remaining mixed regimes}\label{sec:mixed}

Theorems \ref{thm:stretched-systole} and \ref{thm:squashed-systole} cover exactly the cases in which the linear minimiser from \cref{thm:linear-extrema} is pure.  We now describe the complementary cases.

Let $d=k+1$.  For stretched fibres, the linear minimum has type $c=d-N$ and $r=2N-d$ whenever $d>N$.  Both $c$ and $r$ are positive because $d\le2N-1$.  Thus the candidate is mixed CR.  Its volume is
\[
 v_k\,{}_2F_1\left(
 -\frac12,k+1-N;\frac{k+1}{2};1-\mu
 \right).
\]

For squashed fibres and even $k$, the linear minimum has type $c=k/2$ and $r=1$, and volume
\[
 v_k\,{}_2F_1\left(
 -\frac12,\frac k2;\frac{k+1}{2};1-\mu
 \right).
\]
The pointwise estimates in \cref{lem:pointwise-jacobian} are not sharp on these mixed tangent spaces, so a new global inequality is required.

\begin{conjecture}[Mixed CR minimisers]\label{conj:mixed}
Let $1\le k\le2N-2$, $d=k+1$, and $\mu>0$.
\[
 \sys_k^{\Z_2}(g_\mu)
 =\min_{V\in\Gr_d(\R^{2N})}
   \Vol_{g_\mu}(\RP(V)).
\]
If $\mu\ne1$, moreover, every minimiser is, modulo $U(N)$,
\[
 V\cong
 \begin{cases}
 \C^{c_{\min}}\oplus\R^{d-2c_{\min}},&\mu>1,\\
 \C^{c_{\max}}\oplus\R^{d-2c_{\max}},&0<\mu<1,
 \end{cases}
\]
where $c_{\min}$ and $c_{\max}$ are as in \cref{thm:intro-volume}.  When $\mu=1$, every round linear projective $k$-plane is a minimiser, and the equality family is the full $O(2N)$-orbit.
\end{conjecture}

The conjecture is proved by \cref{thm:stretched-systole,thm:squashed-systole} whenever the minimiser in \cref{conj:mixed} is pure.  Gil-Medrano's theorem proves it for $(N,k)=(2,2)$, where every equatorial projective plane has mixed type $(1,1)$ \cite{GilMedrano2016}.

There are several plausible routes to \cref{conj:mixed}.

\begin{enumerate}
 \item \emph{A $U(N)$-Crofton formula.}  The round proof averages intersections over the full orthogonal group.  The Berger metric only has $U(N)$ symmetry, and the relevant real Grassmannian decomposes into K\"ahler-angle orbits.  A sharp kinematic formula weighted by those orbits may yield the sharp constants required by \cref{conj:mixed}.  Hermitian integral geometry and Tasaki's multiple-angle formalism are natural inputs.
 \item \emph{A calibration-type inequality.}  The candidate tangent planes have a fixed complex/totally-real splitting.  A differential form or convex family of forms that detects both components could give a pointwise lower bound sharper than the bounds in \cref{lem:pointwise-jacobian}.
 \item \emph{Hopf slicing.}  One may separate vertical and horizontal Jacobians, slice by Hopf fibres, and combine a coarea inequality with projective minimisation on the base $\CP^{N-1}$.  The obstruction is that a general mod-two cycle need not be adapted to the fibration.
 \item \emph{Averaging over the candidate orbit.}  The critical orbit in \cref{prop:critical-orbits} carries a natural invariant measure.  An integral-geometric identity involving incidence with this orbit could play the role of the equator averaging formula used in dimension three; compare \cite{AmbrozioMontezuma2020,Ambrozio2026}.
\end{enumerate}

We are not aware of how to prove \cref{conj:mixed}, however we note the following reduction.
The linear results of the present paper fix both the candidate and the sharp constant in every dimension.
The proof of \cref{conj:mixed} therefore reduces to establishing a single global inequality with an already determined equality geometry. 

\section{Further remarks}\label{sec:further}

\subsection{Minimality for one Berger parameter}

The mean-curvature formula has an immediate rigidity consequence.  If a linear equator is minimal for one non-round Berger parameter, then it is minimal for every Berger parameter.

\begin{corollary}\label{cor:all-parameters}
Let $V\subset\C^N$.  The following are equivalent:
\begin{enumerate}
 \item $M_V$ is minimal for some $\mu\ne1$;
 \item $M_V$ is minimal for every $\mu>0$;
 \item $V$ is a linear CR subspace.
\end{enumerate}
\end{corollary}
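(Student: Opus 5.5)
The plan is to read everything off the mean-curvature formula of \cref{prop:mean-curvature} together with the CR classification of \cref{thm:CR-classification}. The point is that the condition for minimality does not depend on $\mu$. In the formula
\[
 \Htr_\mu(p)=\frac{2(\mu-1)}{1+(\mu-1)\norm{Ap}^2}\,\pr_{V^\perp}(JAp),
\]
the only $\mu$-dependence is in the scalar prefactor. The vector $\pr_{V^\perp}(JAp)$ depends only on $V$ and $p$.

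First, the denominator never vanishes. Since $0\le\norm{Ap}^2\le1$, we have $1+(\mu-1)\norm{Ap}^2\ge\min\{1,\mu\}>0$ for every $\mu>0$. It follows that, for each fixed $\mu\ne1$, the vector $\Htr_\mu$ vanishes identically exactly when $\pr_{V^\perp}JA=0$. This is the content of (i)$\Leftrightarrow$(iii) in \cref{thm:CR-classification}, and it is equivalent to $V$ being linear CR.

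This gives the chain of implications. The step (i)$\Rightarrow$(iii) is \cref{thm:CR-classification} applied at the particular non-round parameter. For (iii)$\Rightarrow$(ii), a CR subspace has $\pr_{V^\perp}JA=0$, so $\Htr_\mu\equiv0$ for every $\mu\ne1$. At $\mu=1$ the prefactor vanishes; alternatively, $M_V$ is totally geodesic in the round sphere. Finally, (ii)$\Rightarrow$(i) is trivial, because every $\mu\ne1$ is among the parameters.

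There is no real obstacle here. The only step needing care is the separate treatment of $\mu=1$, where \cref{thm:CR-classification} does not apply but minimality is automatic.
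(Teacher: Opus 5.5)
Your proposal is correct and is essentially the paper's proof. Both arguments observe that for $\mu\ne1$ the vanishing condition $\pr_{V^\perp}(JAp)=0$ in \cref{prop:mean-curvature} does not depend on $\mu$, and treat $\mu=1$ separately using total geodesy of round linear equators; your check that $1+(\mu-1)\norm{Ap}^2\ge\min\{1,\mu\}>0$ supplies a detail the paper leaves implicit.
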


\begin{proof}
For $\mu\ne1$, the vanishing condition in \cref{prop:mean-curvature} is independent of $\mu$.  At $\mu=1$ every linear equator is totally geodesic.
\end{proof}

This is the higher-dimensional linear analogue of the phenomenon observed for distinguished minimal surfaces in Berger three-spheres \cite{Torralbo2012}.

\subsection{Endpoint asymptotics}

For a mixed CR type $(c,r)$, the beta-integral in \cref{prop:hypergeometric-volume} also records the collapse and stretching asymptotics.  If $c,r>0$, then
\[
 \lim_{\mu\downarrow0}
 \frac{\Vol_{g_\mu}(\RP(\C^c\oplus\R^r))}{v_{d-1}}
 =\frac{B(c,(r+1)/2)}{B(c,r/2)},
\]
and
\[
 \frac{\Vol_{g_\mu}(\RP(\C^c\oplus\R^r))}{v_{d-1}}
 =\sqrt\mu\,
 \frac{B(c+1/2,r/2)}{B(c,r/2)}+o(\sqrt\mu)
 \qquad(\mu\to\infty).
\]
The resulting beta-function constants may be useful in testing any proposed mixed-regime inequality.

\subsection{What remains beyond the linear problem}

Even if \cref{conj:mixed} is established, several natural variational questions remain.  The mixed CR equators are not totally geodesic, so their Jacobi operators contain genuine contributions from the second fundamental form.  Determining their Morse index and nullity, and deciding whether the critical orbits in \cref{prop:critical-orbits} are Morse--Bott for the unrestricted area functional, would connect the present calculations with the index theory of Berger submanifolds \cite{TorralboUrbano2022} and the programme proposed in \cite{Ambrozio2026}.  A quantitative version of \cref{conj:mixed}, controlling distance from the $U(N)$-orbit by the volume excess, would be especially useful.

\end{document}